\newtheorem{theorem}{Theorem}
\newtheorem{lemma}{Lemma}
\newtheorem{corollary}{Corollary}
\newtheorem{proposition}{Proposition}
\DeclareMathOperator {\sat}{\mathsf{sat}}
\DeclareMathOperator {\sz}{\textit{sz}\,}
\title{A realization theorem for the modal logic of transitive closure $\mathsf{K}^+$}
\author{Daniyar Shamkanov\thanks{The article was prepared within the framework of the project ``International academic cooperation" HSE University.}\\ \normalsize{\textit{Steklov Mathematical Institute of the Russian Academy of Sciences}}\\
\normalsize{\textit{National Research University Higher School of Economics}}\\
\normalsize{\textit{daniyar.shamkanov@gmail.com}}\\}
\date{}
\begin{document}

\maketitle

\begin{abstract}
We present a justification logic corresponding to the modal logic of transitive closure $\mathsf{K}^+$ and establish a normal realization theorem relating these two systems. The result is obtained by means of a sequent calculus allowing non-well-founded proofs.\\\\
\textit{Keywords:} justification logic, transitive closure, realization theorems,  cyclic and non-well-founded proofs.
\end{abstract}

\section{Introduction}
It is worth to recall that justification logics are a family of epistemic systems whose language feature is the replacement of modal expressions $\Box A$ with $[h]\, A$, where $[h]\, A$ is interpreted as `$h$ is a justification for $A$'. A lot of research on justification logics has been undertaken since Artemov introduced the logic of proofs $\mathsf{LP}$ \cite{Artemov2001}, where $[h] A$ is understood as ‘$h$ is a proof for $ A$’. Among other things established by Artemov, a realization theorem relating $\mathsf{LP}$ and the standard modal logic $\mathsf{S4}$ has attracted considerable attention. This result involves the following notion of forgetful translation (or projection). For any formula $B$ of $\mathsf{LP}$, the forgetful translation of $B$ is obtained from the given formula by replacing all subformulas of the form $[h]\, C$ with $\Box C$. It is easy to see that the forgetful translation of every formula provable in $\mathsf{LP}$ is provable in $\mathsf{S4}$. The realization theorem states the converse: any formula $A$ provable in $\mathsf{S4}$ turns out to be the forgetful translation of a formula $B$ provable in $\mathsf{LP}$. Due to the theorem, the logic $\mathsf{LP}$ is called a justification counterpart of $\mathsf{S4}$. 
 


To date, justification counterparts of many modal logics have been found and corresponding realization theorems have been obtained. However, the epistemically important case of the modal logic of common knowledge still needs to be explored. The concept of common knowledge is captured in this logic according to the so-called fixed-point account, i.e. common knowledge of $A$ is defined as the greatest fixed-point of the mapping 
\[X \mapsto (\text{everybody knows $A$ and everybody knows $X$)}.\] 
Accordingly, the logic of common knowledge belongs to the family of modal fixed-point logics and, like other systems from this family, is difficult to study in many respects. Although Bucheli, Kuznets and Studer introduced a justification logic similar to the logic of common knowledge, whether one can prove the realization theorem remains to be an open question  \cite{Bucheli2011, Bucheli2012}. 
Note that, for the modified concept of common knowledge known as generic common knowledge, the corresponding modal logic turns out to be realizable \cite{Antonakos2013,Artemov2006}.

This article focuses on the case of the logic of transitive closure $\mathsf{K}^+$, which is very similar to the case of the modal logic of common knowledge. We recall that the system $\mathsf{K}^+$ \cite{Kashima2010,Doczkal2012,Kikot2020} is a Kripke-complete modal propositional logic whose language contains modal connectives $\Box$ and $\Box^+$. Like the modal logic of common knowledge, this system belongs to the family of modal fixed-point logics, which can be explained as follows: in any $\mathsf{K^+}$-algebra $\mathcal{A}$, an element $\Box^+ a$ is the greatest fixed-point of a mapping $z \mapsto \Box a \wedge \Box z$, i.e. $\Box^+ a = \nu z. (\Box a \wedge \Box z)$. Therefore, it is not surprising that $\mathsf{K^+}$ is not valid in its canonical Kripke frame and is not strongly complete with respect to its Kripke semantics. In the given work, we present a justification counterpart of $\mathsf{K}^+$ and establish the corresponding realization theorem by means of a sequent calculus allowing non-well-founded proof trees. It remains to emphasize that we know only one more theorem about normal realization for a logic that is not Kripke-canonical, namely,  a realization theorem for the G\"{o}del-L\"{o}b provability logic $\mathsf{GL}$ (see \cite{Shamkanov2016}).

\section{Logics $\mathsf{K}^+$ and $\mathsf{J}^+$} 
In this section, we briefly remind the reader of the bimodal logic $\mathsf{K}^+$ \cite{Kashima2010,Doczkal2012} and define a justification logic $\mathsf{J}^+$, which will be proved to be a counterpart of $\mathsf{K}^+$. We also prove some properties of $\mathsf{J}^+$ in order to use them later.

\textit{Formulas of $\mathsf{K}^+$} are built from propositional variables $p_0, p_1, p_2, \dotsc$ and the constant $\bot$ by means of propositional connectives $\to$, $\Box$ and $\Box^+$. We consider other Boolean connectives as abbreviations: $\neg A  := A\rightarrow \bot$, $\top  := \neg \bot$, $A \wedge B  := \neg (A \rightarrow \neg B)$, $A \vee B  := (\neg A \rightarrow B)$. The \emph{size of a formula $A$}, denoted by $\sz (A)$, is defined inductively in the following way:
\begin{gather*}
\sz(p):=1, \qquad \sz(\bot):=1, \qquad \sz (A\to B):= \sz(A)+\sz(B)+1,\\
\sz(\Box A):=\sz(A)+1,\qquad \sz(\Box^+ A):=\sz(A)+1.
\end{gather*}


The Frege-Hilbert calculus of the logic $\mathsf{K}^+$ is given by the following axioms and inference rules.\medskip

\textit{Axioms:}
\begin{itemize}
\item $A \to (B \to A)$;
\item $(A \to (B \to C)) \to ((A \to B) \to (A \to C)) $;
\item $\neg
\neg A \to A$;
\item $\Box (A \rightarrow B) \rightarrow (\Box A \rightarrow \Box B)$;
\item $\Box^+ (A \rightarrow B) \rightarrow (\Box^+ A \rightarrow \Box^+ B)$;
\item $\Box^+ A \rightarrow \Box A $;
\item $\Box^+ A \rightarrow \Box \Box^+ A$;
\item $\Box A \wedge \Box^+(A \rightarrow \Box A) \rightarrow \Box^+ A$.
\end{itemize}

\textit{Inference rules:} 
\[
\AXC{$A$}
\AXC{$A \rightarrow B$}
\LeftLabel{\textsf{mp}}
\RightLabel{ ,}
\BIC{$B$}
\DisplayProof \qquad
\AXC{$A$}
\LeftLabel{$\mathsf{nec}$}
\RightLabel{ .}
\UIC{$\Box^+ A$}
\DisplayProof 
\]


Recall that a bimodal Kripke frame $(W,R,S)$ is a \emph{$\mathsf{K}^+$-frame} if the relation $S$ is the transitive closure of $R$.

\begin{proposition}[see \cite{Kashima2010,Doczkal2012,Kikot2020}]
The logic $\mathsf{K}^+$ is sound and weakly complete with respect to the class of $\mathsf{K}^+$-frame. 
\end{proposition}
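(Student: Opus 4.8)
The plan is to establish soundness and weak completeness separately, following the standard pattern for modal fixed-point logics.

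For soundness, I would proceed by induction on the length of a derivation in the Frege-Hilbert calculus. The propositional axioms and the normality axioms for $\Box$ and $\Box^+$, as well as the rules $\mathsf{mp}$ and $\mathsf{nec}$, are verified by the usual local-validity arguments, using that $S$ interprets $\Box^+$ and $R$ interprets $\Box$. The interesting axioms encode that $S$ is the transitive closure of $R$: the axiom $\Box^+ A \to \Box A$ reflects $R \subseteq S$; the axiom $\Box^+ A \to \Box\Box^+ A$ reflects transitivity together with $R \subseteq S$ (i.e. $R \circ S \subseteq S$); and the induction axiom $\Box A \wedge \Box^+(A \to \Box A) \to \Box^+ A$ reflects that $S$ is the \emph{least} transitive relation containing $R$. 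For the last one I would argue contrapositively at a world $w$: if $\Box^+ A$ fails at $w$, there is an $S$-successor refuting $A$; since $S$ is the transitive closure of $R$, it is reached by a finite $R$-chain $w = x_0 R x_1 R \dots R x_n$, and taking the least index along this chain at which $A$ fails produces a violation of either $\Box A$ at $w$ or of $\Box^+(A \to \Box A)$ at $w$.

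For weak completeness, the main obstacle is exactly the fixed-point character already flagged in the introduction: $\mathsf{K}^+$ is \emph{not} valid on its canonical frame, so the naive canonical-model construction fails, and one cannot hope for strong completeness. I would therefore invoke the established route via a \emph{finitary} model construction rather than the raw canonical model. Concretely, given a formula $A$ not provable in $\mathsf{K}^+$, I would build a finite model refuting it by taking a filtration through (a suitable closure of) the subformulas of $A$, choosing the filtrated relation $R$ so that its transitive closure $S$ correctly interprets $\Box^+$ on the relevant formulas; the induction axiom is what guarantees that the greatest-fixed-point reading of $\Box^+$ agrees with the transitive-closure reading on this finite structure. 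Since this is precisely the delicate point where the logic departs from Kripke-canonicity, I would lean on the cited completeness results \cite{Kashima2010,Doczkal2012,Kikot2020} for the detailed verification.

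Given that the statement is attributed to prior work, I expect the actual proof here to be a citation rather than a from-scratch argument; the plan above indicates how one would reconstruct it, with the transitive-closure filtration being the step that carries all the real content.
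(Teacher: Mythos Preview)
Your expectation is correct: the paper gives no proof of this proposition at all and simply cites \cite{Kashima2010,Doczkal2012,Kikot2020}. Your sketch of soundness and of the filtration-based weak completeness is a reasonable reconstruction of what those references do, and your remark that canonicity fails (forcing a finitary construction) matches exactly what the paper itself flags in the introduction.
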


Now we define a justification logic $\mathsf{J}^+$. The language of  $\mathsf{J}^+$ contains three sorts of expressions: two sorts of terms and one sort of formulas.
\emph{Justification terms} of both sorts are simultaneously built from the disjoint countable sets of variables $\mathit{JV}_1 =\{x_0, x_1, \dotsc \}$ and $\mathit{JV}_2 =\{y_0, y_1, \dotsc \}$ and constants $\mathit{JC}_2 =\{c_0, c_1, \dotsc \}$
according to the grammar:
\begin{gather*}
w ::= x_i \,\,|\,\, (w \cdot w) \,\,|\,\,  \mathsf{head}(s)  \,\,|\,\,\mathsf{tail}(s)  \,\,|\,\, (w+w) ,\\
s ::= y_i \,\,|\,\, c_i \,\,|\,\, (s \cdot s) \,\,|\,\,  \mathsf{ind}(w,s)   \,\,|\,\, (s+s) ,  
\end{gather*}
where $w$ and $s$ stand for justification terms of the first and second sort respectively. The corresponding sets of terms are denoted by $\mathit{JT}_1$ and $\mathit{JT}_2$.
We call a justification term \emph{ground} if it doesn't contain variables.
\emph{Justification formulas} are given by the grammar:
\[ A ::=  p_i \,\,|\,\, \bot \,\,|\,\, (A \rightarrow A)  \,\,|\,\, [w] A \,\,|\,\,  [ s]_\mathsf{tc} \,A . \]  
We denote the set of justification formulas by $\mathit{JF}$.    

The logic $\mathsf{J}^+_0$ is defined by the following axioms and the following inference rule.\medskip

\textit{Axioms:}
\begin{itemize}
\item[(i)] $A \to (B \to A)$;
\item[(ii)] $(A \to (B \to C)) \to ((A \to B) \to (A \to C)) $;
\item[(iii)] $\neg
\neg A \to A$;
\item[(iv)] $[h] (A \rightarrow B) \rightarrow ([w]A \rightarrow [h \cdot w] B)$;
\item[(v)] $[h]A \vee [w] A \rightarrow [h+w] A$;
\item[(vi)] $[ t]_\mathsf{tc}\, (A \rightarrow B) \rightarrow ([s]_\mathsf{tc}\, A \rightarrow [ t \cdot s ]_\mathsf{tc}\, B)$;
\item[(vii)] $[ s]_\mathsf{tc}\, A \rightarrow [\mathsf{head}(s)] A$;
\item[(viii)] $[ s]_\mathsf{tc}\, A \rightarrow [\mathsf{tail}(s)]  [ s]_\mathsf{tc}\, A$;
\item[(ix)] $ [w] A \wedge [s]_\mathsf{tc}\, (A \rightarrow [w] A) \rightarrow  [\mathsf{ind}(w,s)]_\mathsf{tc} \, A$;
\item[(x)] $[t]_\mathsf{tc}\,A \vee [s]_\mathsf{tc}\, A \rightarrow [t+s]_\mathsf{tc}\, A$.
\end{itemize}

\textit{Inference rule:} 
\begin{gather*}
\AXC{$A$}
\AXC{$A\to B$}
\LeftLabel{$\mathsf{mp}$}
\RightLabel{ .}
\BIC{$B$}
\DisplayProof\qquad
\end{gather*}\smallskip

We introduce the logic $\mathsf{J}^+$ by adding the following set of new axioms
\[\{ [c ]_\mathsf{tc} \,A \mid c \in \mathit{JC}_2 \text{ and $A$ is an axiom of $\mathsf{J}^+_0$} \}\]
to $\mathsf{J}^+_0$.
Subsets of the given set of axioms are called \emph{constant specifications}. For a constant specification $\mathit{cs}$, let $\mathsf{J}^+_\mathit{cs}$ be the fragment of $\mathsf{J}^+$ in which all axioms of the form $[c ]_\mathsf{tc} \,A $ are taken from $\mathit{cs}$. Note that $\mathsf{J}^+_\emptyset$ is the same as $\mathsf{J}^+_0$. Additionally, we define the set of constants $\mathit{Con}(\mathit{cs})$ by setting $c\in \mathit{Con}(\mathit{cs})$ if and only if $[c ]_\mathsf{tc} \,A$ belongs to $\mathit{cs} $ for some formula $A$.

A constant specification $\mathit{cs}$ is called \emph{injective} if, for any $[c_i ]_\mathsf{tc} \,A$ and $[c_i ]_\mathsf{tc} \,B$ from $\mathit{cs}$, the formulas $A$ and $B$ coincide. In other words, different axioms of $\mathsf{J}^+_0$ are associated with different constants in $\mathit{cs}$. For a proof $\pi$ of $\mathsf{J}^+$, we denote the set of all axioms of the form $[c ]_\mathsf{tc} \,A$ in $\pi$ by $\mathit{cs}(\pi)$. The proof $\pi$ is called \emph{injective} if the constant specification $\mathit{cs}(\pi)$ is injective. Note that any proof of $\mathsf{J}^+_\mathit{cs}$, where $\mathit{cs}$ is injective, is also injective. Further note that $\sigma(\mathit{cs})= \{\sigma (C)\mid C\in \mathit{cs}\} $ is an injective constant specification for any injective constant specification $\mathit{cs}$ and any substitution 
\[\sigma =[h_1/x_{i_1}, \dotsc , h_n/x_{i_n}, t_1/y_{j_1}, \dotsc , t_m/y_{j_m}],\] 
where $h_1,\dotsc , h_n$ ($t_1,\dotsc , t_m$) are justification terms of the first (second) sort.




\begin{lemma}[substitution] 
\label{Substitution lemma} If $\mathsf{J}^+_\mathit{cs} \vdash A$, then, for any substitution
\[\sigma =[h_1/x_{i_1}, \dotsc , h_n/x_{i_n}, t_1/y_{j_1}, \dotsc , t_m/y_{j_m}],\] 
we have $\mathsf{J}^+_{\sigma(\mathit{cs})} \vdash \sigma(A)$. In particular, if $A$ has an injective proof in $\mathsf{J}^+ $, then so does $\sigma (A)$.
\end{lemma}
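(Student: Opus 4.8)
The plan is to argue by induction on the length of a derivation of $A$ in $\mathsf{J}^+_{\mathit{cs}}$. The crucial observation is that every substitution $\sigma$ acts as a homomorphism: it commutes with all the term constructors ($\cdot$, $+$, $\mathsf{head}$, $\mathsf{tail}$, $\mathsf{ind}$) and with all the formula constructors ($\to$, $[\cdot]$, $[\cdot]_\mathsf{tc}$), while leaving every constant $c \in \mathit{JC}_2$ untouched. In particular $\sigma(h \cdot w) = \sigma(h)\cdot\sigma(w)$, $\sigma(\mathsf{ind}(w,s)) = \mathsf{ind}(\sigma(w),\sigma(s))$, $\sigma(A \to B) = \sigma(A)\to\sigma(B)$, and so on. I would first record these identities, since they are exactly what makes each axiom scheme closed under $\sigma$.

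For the base case I consider the possible justifications of the current formula of the derivation. If $A$ is an instance of one of the schemes (i)--(x), then, applying the homomorphism identities to the term and formula metavariables occurring in the scheme, $\sigma(A)$ is seen to be another instance of the very same scheme; for example, an instance $[h](B\to C)\to([w]B\to[h\cdot w]C)$ of (iv) is carried to $[\sigma h](\sigma B\to\sigma C)\to([\sigma w]\sigma B\to[\sigma h\cdot\sigma w]\sigma C)$, again an instance of (iv), and similarly the induction axiom (ix) is preserved because $\sigma(\mathsf{ind}(w,s))=\mathsf{ind}(\sigma w,\sigma s)$. If $A$ is a constant-specification axiom $[c]_\mathsf{tc}\,B\in\mathit{cs}$ with $B$ an axiom of $\mathsf{J}^+_0$, then, since $c$ is a constant, $\sigma(A)=[c]_\mathsf{tc}\,\sigma(B)$; by the previous case $\sigma(B)$ is again an axiom of $\mathsf{J}^+_0$, and $[c]_\mathsf{tc}\,\sigma(B)=\sigma([c]_\mathsf{tc}\,B)\in\sigma(\mathit{cs})$ by definition, so $\sigma(A)$ is a constant-specification axiom of $\mathsf{J}^+_{\sigma(\mathit{cs})}$.

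For the inductive step, suppose $A$ is obtained by $\mathsf{mp}$ from earlier formulas $B$ and $B\to A$. By the induction hypothesis $\mathsf{J}^+_{\sigma(\mathit{cs})}\vdash\sigma(B)$ and $\mathsf{J}^+_{\sigma(\mathit{cs})}\vdash\sigma(B\to A)=\sigma(B)\to\sigma(A)$, so a single application of $\mathsf{mp}$ yields $\mathsf{J}^+_{\sigma(\mathit{cs})}\vdash\sigma(A)$, completing the induction. For the final clause, note that the construction rewrites a given derivation $\pi$ of $A$ formula-by-formula, and the constant-specification axioms appearing in the resulting derivation are precisely the $\sigma$-images of those in $\pi$; hence its constant specification is contained in $\sigma(\mathit{cs}(\pi))$. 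Since $\sigma(\mathit{cs}(\pi))$ is injective whenever $\mathit{cs}(\pi)$ is, as already observed, the produced derivation of $\sigma(A)$ is injective as soon as $\pi$ is.

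I expect no serious obstacle here: the argument is a routine structural induction, and the only points demanding care are purely bookkeeping — verifying the homomorphism identities for the compound terms so that the more elaborate schemes (iv), (vi) and especially the induction axiom (ix) are recognized as closed under $\sigma$, and checking in the final clause that no constant-specification axioms outside $\sigma(\mathit{cs}(\pi))$ are introduced.
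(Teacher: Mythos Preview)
Your proposal is correct and follows essentially the same approach as the paper: the paper simply asserts that $\mathsf{J}^+_{\mathit{cs}}\vdash A$ ``immediately implies'' $\mathsf{J}^+_{\sigma(\mathit{cs})}\vdash\sigma(A)$, and you have spelled out the routine induction on derivations that underlies this. Your treatment of the injectivity clause likewise matches the paper's, only with the bookkeeping made explicit.
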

\begin{proof}
The assumption $\mathsf{J}^+_\mathit{cs} \vdash A$ immediately implies $\mathsf{J}^+_{\sigma(\mathit{cs})} \vdash \sigma(A)$. In addition, if $A$ has an injective proof $\pi$ in $\mathsf{J}^+ $, then $\mathsf{J}^+_{\mathit{cs}(\pi)} \vdash A$ and the constant specification $\mathit{cs}(\pi)$ is injective. Therefore, $\sigma (\mathit{cs}(\pi))$ is injective. Since $\mathsf{J}^+_{\sigma(\mathit{cs})} \vdash \sigma(A)$, the formula $\sigma (A)$ has an injective proof in $\mathsf{J}^+_{\sigma(\mathit{cs})}$ and in $\mathsf{J}^+ $.
\end{proof}

\begin{lemma}[axiom internalization] Suppose $\mathsf{J}^+_{\mathit{cs}_0} \vdash A$, where $\mathit{cs}_0$ is a finite constant specification and $A$ is an axiom of $\mathsf{J}^+$. Then there exist a finite superset $\mathit{cs}_1$ of $\mathit{cs}_0$ and a ground justification term $s$ such that $\mathsf{J}^+_{\mathit{cs}_1} \vdash [s]_\mathsf{tc} \, A$. Moreover, if $\mathit{cs}_0$ is injective, then the same holds for $\mathit{cs}_1$.
\end{lemma}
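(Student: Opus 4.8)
The plan is to argue by cases on the form of the axiom $A$, following the definition of the axioms of $\mathsf{J}^+$: either $A$ is one of the schemata (i)--(x) of $\mathsf{J}^+_0$, or $A = [c]_\mathsf{tc}\, B$ for some constant $c \in \mathit{JC}_2$ and some axiom $B$ of $\mathsf{J}^+_0$. In the first case the internalization is immediate. I would pick a constant $c \in \mathit{JC}_2$ not occurring in $\mathit{cs}_0$, set $s := c$ and $\mathit{cs}_1 := \mathit{cs}_0 \cup \{[c]_\mathsf{tc}\, A\}$. Since $A$ is an axiom of $\mathsf{J}^+_0$, the formula $[c]_\mathsf{tc}\, A$ is by definition an axiom of $\mathsf{J}^+$, so $\mathsf{J}^+_{\mathit{cs}_1} \vdash [s]_\mathsf{tc}\, A$ trivially. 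Choosing $c$ fresh keeps $\mathit{cs}_1$ injective whenever $\mathit{cs}_0$ is.

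The second case is the heart of the lemma and the place where the transitive-closure machinery enters, since we must produce a $\mathsf{tc}$-justification $[s]_\mathsf{tc}\, A$ of a formula $A = [c]_\mathsf{tc}\, B$ that already carries a $\mathsf{tc}$-modality. The key observation is that axiom (viii), with its schematic formula taken to be $B$ and its schematic term taken to be $c$, is exactly the implication $A \to [\mathsf{tail}(c)]\, A$. As $A$ is derivable by hypothesis, modus ponens gives $\mathsf{J}^+_{\mathit{cs}_0} \vdash [\mathsf{tail}(c)]\, A$, which is precisely the first conjunct $[w]\,A$ of the induction axiom (ix) with $w := \mathsf{tail}(c)$. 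For the second conjunct I would internalize the implication $A \to [\mathsf{tail}(c)]\, A$ itself: being an instance of axiom (viii), it is an axiom of $\mathsf{J}^+_0$, so for a fresh constant $c'$ the formula $[c']_\mathsf{tc}\,(A \to [\mathsf{tail}(c)]\, A)$ is an axiom of $\mathsf{J}^+$, and I set $\mathit{cs}_1 := \mathit{cs}_0 \cup \{[c']_\mathsf{tc}\,(A \to [\mathsf{tail}(c)]\, A)\}$. Feeding both conjuncts into the instance
\[ [\mathsf{tail}(c)]\, A \wedge [c']_\mathsf{tc}\,(A \to [\mathsf{tail}(c)]\, A) \to [\mathsf{ind}(\mathsf{tail}(c),c')]_\mathsf{tc}\, A \]
of axiom (ix) and combining them by propositional reasoning, modus ponens yields $\mathsf{J}^+_{\mathit{cs}_1} \vdash [\mathsf{ind}(\mathsf{tail}(c),c')]_\mathsf{tc}\, A$. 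Thus $s := \mathsf{ind}(\mathsf{tail}(c),c')$ is the required ground term.

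Finally, I would observe that in both cases the only constants adjoined to $\mathit{cs}_0$ are chosen fresh, so $\mathit{cs}_1$ is a finite superset of $\mathit{cs}_0$ and remains injective whenever $\mathit{cs}_0$ is. The main obstacle is purely conceptual and lives in the second case: recognizing that the induction axiom (ix), with its inductive premise supplied for free by axiom (viii), is exactly the device that converts an ordinary first-sort justification $[\mathsf{tail}(c)]\,A$ into a transitive-closure justification $[s]_\mathsf{tc}\, A$. Once this is seen, the remaining steps are routine propositional bookkeeping and freshness arguments.
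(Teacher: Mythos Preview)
Your proof is correct and follows essentially the same route as the paper: the same two-case split on the form of the axiom, the same use of a fresh constant in the first case, and in the second case the same combination of axiom~(viii) to obtain $[\mathsf{tail}(c)]\,A$, a fresh constant to internalize the implication $A\to[\mathsf{tail}(c)]\,A$, and axiom~(ix) to conclude $[\mathsf{ind}(\mathsf{tail}(c),c')]_\mathsf{tc}\,A$. The only cosmetic difference is that the paper writes out $A$ as $[c]_\mathsf{tc}\,B$ throughout the second case rather than abbreviating.
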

\begin{proof}
If $A$ is an axiom of $\mathsf{J}^+_0$, then $\mathsf{J}^+_{\mathit{cs}_1} \vdash [c_i]_\mathsf{tc} \, A$, where $c_i$ is the first justification constant not belonging to $\mathit{Con}(\mathit{cs}_0)$ and $\mathit{cs}_1\coloneq \mathit{cs}_0\cup \{[c_i]_\mathsf{tc} \, A\}$. If $A$ has the form $[c]_\mathsf{tc} \, B$, then $\mathsf{J}^+_{\mathit{cs}_0} \vdash [ c]_\mathsf{tc}\, B \rightarrow [\mathsf{tail}(c)]  [ c]_\mathsf{tc}\, B$. In this case, $\mathsf{J}^+_{\mathit{cs}_1} \vdash [c_i]_\mathsf{tc} \, ([ c]_\mathsf{tc}\, B \rightarrow [\mathsf{tail}(c)]  [ c]_\mathsf{tc}\, B)$, where $c_i$ is the first justification constant not belonging to $\mathit{Con}(\mathit{cs}_0)$ and $\mathit{cs}_1\coloneq \mathit{cs}_0\cup \{[c_i]_\mathsf{tc} \, ([ c]_\mathsf{tc}\, B \rightarrow [\mathsf{tail}(c)]  [ c]_\mathsf{tc}\, B)\}$. Since $\mathsf{J}^+_{\mathit{cs}_0} \vdash [ c]_\mathsf{tc}\, B$, we have $\mathsf{J}^+_{\mathit{cs}_1} \vdash [ c]_\mathsf{tc}\, B$ and $\mathsf{J}^+_{\mathit{cs}_1} \vdash  [\mathsf{tail}(c)]  [ c]_\mathsf{tc}\, B$. Therefore, $\mathsf{J}^+_{\mathit{cs}_1} \vdash [\mathsf{tail}(c)]  [ c]_\mathsf{tc}\, B\wedge [c_i]_\mathsf{tc} \, ([ c]_\mathsf{tc}\, B \rightarrow [\mathsf{tail}(c)]  [ c]_\mathsf{tc}\, B)$. Applying Axiom (ix), we obtain $\mathsf{J}^+_{\mathit{cs}_1} \vdash [\mathsf{ind}(\mathsf{tail}(c),c_i)]_\mathsf{tc}\, [ c]_\mathsf{tc}\, B$, i.e. $\mathsf{J}^+_{\mathit{cs}_1} \vdash [\mathsf{ind}(\mathsf{tail}(c),c_i)]_\mathsf{tc} \,A$. Trivially, in both cases, $\mathit{cs}_1$ is injective if $\mathit{cs}_0$ is injective.
\end{proof}
\begin{lemma}[internalization]\label{internalization} Suppose $\mathsf{J}^+_{\mathit{cs}_0} \vdash A$, where $\mathit{cs}_0$ is a finite constant specification. Then there exist a finite superset $\mathit{cs}_1$ of $\mathit{cs}_0$ and a ground justification term $s$ such that $\mathsf{J}^+_{\mathit{cs}_1} \vdash [s]_\mathsf{tc} \, A$. Moreover, if $\mathit{cs}_0$ is injective, then the same holds for $\mathit{cs}_1$.
\end{lemma}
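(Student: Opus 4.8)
The plan is to induct on the length of a Frege--Hilbert derivation of $A$ in $\mathsf{J}^+_{\mathit{cs}_0}$, internalizing each line into an $[\cdot]_\mathsf{tc}$-statement while growing the constant specification monotonically along a single chain. Concretely, let $A_1, \dotsc, A_m = A$ be such a derivation. I would construct an increasing sequence of finite constant specifications $\mathit{cs}_0 = D_0 \subseteq D_1 \subseteq \dotsb \subseteq D_m$ together with ground terms $s_1, \dotsc, s_m$ so that $\mathsf{J}^+_{D_k} \vdash [s_k]_\mathsf{tc}\, A_k$ for each $k$; taking $\mathit{cs}_1 := D_m$ and $s := s_m$ then yields the claim. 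Processing the derivation linearly, rather than recombining constant specifications obtained from independent subderivations, is what will keep injectivity under control, since at each step the freshly introduced constant can be chosen genuinely new relative to everything used so far.

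There are two kinds of lines to handle. If $A_k$ is an axiom of $\mathsf{J}^+$ --- that is, either an axiom schema (i)--(x) of $\mathsf{J}^+_0$ or a constant axiom $[c]_\mathsf{tc}\, B$ lying in $\mathit{cs}_0 \subseteq D_{k-1}$ --- then $\mathsf{J}^+_{D_{k-1}} \vdash A_k$, and I would apply the axiom internalization lemma with $D_{k-1}$ in place of $\mathit{cs}_0$ to obtain a finite $D_k \supseteq D_{k-1}$ and a ground $s_k$ with $\mathsf{J}^+_{D_k} \vdash [s_k]_\mathsf{tc}\, A_k$, injectivity being preserved by that lemma. If instead $A_k$ is obtained by $\mathsf{mp}$ from earlier lines $A_i$ and $A_j = (A_i \to A_k)$, I would use monotonicity of provability in the constant specification to lift the induction hypotheses to $\mathsf{J}^+_{D_{k-1}} \vdash [s_i]_\mathsf{tc}\, A_i$ and $\mathsf{J}^+_{D_{k-1}} \vdash [s_j]_\mathsf{tc}\,(A_i \to A_k)$, and then invoke Axiom (vi),
\[[s_j]_\mathsf{tc}\,(A_i \to A_k) \to ([s_i]_\mathsf{tc}\, A_i \to [s_j \cdot s_i]_\mathsf{tc}\, A_k),\]
twice with $\mathsf{mp}$ to derive $\mathsf{J}^+_{D_{k-1}} \vdash [s_j \cdot s_i]_\mathsf{tc}\, A_k$. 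Here I set $s_k := s_j \cdot s_i$, which is ground, and $D_k := D_{k-1}$, so that no new constant appears and injectivity is trivially maintained.

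Throughout, I rely on the fact that enlarging the constant specification preserves derivability, so that $\mathsf{J}^+_{D_k} \vdash [s_j]_\mathsf{tc}\, A_j$ continues to hold for every $j \le k$ once $D_{k-1} \subseteq D_k$. The substantive ingredients are thus the preceding axiom internalization lemma (for the axiom lines) and Axiom (vi) (for the modus ponens lines); everything else is bookkeeping. I expect the only genuine subtlety --- and hence the main point to get right --- to be the injectivity claim: it is essential that the internalization be organized around one monotonically growing $\mathit{cs}$, because if one internalized the two premises of a modus ponens step via separate invocations of the axiom internalization lemma and then took the union of the resulting specifications, the independently chosen fresh constants could coincide and destroy injectivity. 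The linear construction above sidesteps this difficulty entirely.
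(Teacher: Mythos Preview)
Your proposal is correct and follows essentially the same approach as the paper: both arguments invoke the axiom internalization lemma for axiom lines and Axiom (vi) for modus ponens, while building up a single monotone chain of constant specifications to preserve injectivity. The only cosmetic difference is that the paper first internalizes all the axioms (the leaves of the proof tree) successively to obtain $\mathit{cs}_1$ once and for all, and then handles the $\mathsf{mp}$ nodes bottom-up without further enlarging $\mathit{cs}_1$, whereas you interleave these steps along the linear derivation; the substance is identical.
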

\begin{proof}
Assume $\mathsf{J}^+_{\mathit{cs}_0} \vdash A$ and consider a proof $\pi$ of $A$ in $\mathsf{J}^+_{\mathit{cs}_0} $. Let $B_1, \dotsc, B_n$ be the axioms of $\mathsf{J}^+_{\mathit{cs}_0} $ that mark the leaves of $\pi$. Successively applying the previous lemma to the formulas $B_1, \dotsc, B_n$ and expanding the resulting constant specifications, we find a finite superset $\mathit{cs}_1$ of $\mathit{cs}_0$ and ground justification terms $s_1,\dotsc, s_n$ such that $\mathsf{J}^+_{\mathit{cs}_1} \vdash [s_i]_\mathsf{tc} \, B_i$ for $i\in\{1,\dotsc, n\}$. Moreover, $\mathit{cs}_1$ is injective if $\mathit{cs}_0$ is injective. Notice that $\pi$ is a tree whose leaves are marked by axioms $B_1, \dotsc, B_n$ and that is constructed according to the rule ($\mathsf{mp}$). Consequently, moving from the leaves of $\pi$ to its root and applying Axiom (vi), we can find, for each node $b$, a ground justification term $s_b$ such that $\mathsf{J}^+_{\mathit{cs}_1} \vdash [s_b]_\mathsf{tc} \, C_b$, where $C_b$ is the formula of the node $b$. Therefore, there is a ground justification term $s$ such that $\mathsf{J}^+_{\mathit{cs}_1} \vdash [s]_\mathsf{tc} \, A$.
\end{proof}
\begin{lemma}[lifting lemma] 
\label{Lifting lemma} Suppose
\[\mathsf{J}^+ \vdash A_1\wedge\dotso \wedge A_n\wedge B_1\wedge\dotso \wedge  B_m
\wedge [s_1]_\mathsf{tc}\, B_1\wedge\dotso \wedge [s_m]_\mathsf{tc}\, B_m \to C.\]
Then there exists a justification term $h(z_1, \dotsc , z_n, y_1, \dotsc , y_m)$ depending only on the explicitly displayed variables such that
\[\mathsf{J}^+ \vdash [z_1]A_1\wedge\dotso \wedge [z_n]A_n\wedge [s_1]_\mathsf{tc}\, B_1\wedge\dotso \wedge [s_m]_\mathsf{tc}\, B_m \to [h(z_1, \dotsc , z_n, s_1, \dotsc , s_m)]C\]
for arbitrary variables $z_1, \dotsc, z_n$ of the first sort.
Moreover, if the original proof is injective, the same holds for the later proof.
\end{lemma}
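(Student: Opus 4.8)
The plan is to internalize the assumed proof and then build a single justification for the whole antecedent of the hypothesis, which can be combined with the internalized implication through the application axiom~(iv). Write $D$ for the antecedent $A_1\wedge\dotsc\wedge A_n\wedge B_1\wedge\dotsc\wedge B_m\wedge[s_1]_\mathsf{tc}\,B_1\wedge\dotsc\wedge[s_m]_\mathsf{tc}\,B_m$. Since the assumed derivation of $D\to C$ is finite, it is a proof in $\mathsf{J}^+_{\mathit{cs}_0}$ for some finite $\mathit{cs}_0$, injective if the given proof is. By Lemma~\ref{internalization} there are a finite $\mathit{cs}_1\supseteq\mathit{cs}_0$ and a ground term $t$ with $\mathsf{J}^+_{\mathit{cs}_1}\vdash[t]_\mathsf{tc}\,(D\to C)$, and by axiom~(vii) $\mathsf{J}^+_{\mathit{cs}_1}\vdash[\mathsf{head}(t)](D\to C)$.

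The next step is to produce a first-sort justification for every conjunct of $D$ out of the antecedent $\Gamma:=[z_1]A_1\wedge\dotsc\wedge[z_n]A_n\wedge[s_1]_\mathsf{tc}\,B_1\wedge\dotsc\wedge[s_m]_\mathsf{tc}\,B_m$ of the target formula. For $A_i$ the conjunct $[z_i]A_i$ of $\Gamma$ already supplies the term $z_i$; for $B_j$, axiom~(vii) gives $[s_j]_\mathsf{tc}\,B_j\to[\mathsf{head}(s_j)]B_j$; and for the conjunct $[s_j]_\mathsf{tc}\,B_j$ itself, axiom~(viii) gives $[s_j]_\mathsf{tc}\,B_j\to[\mathsf{tail}(s_j)][s_j]_\mathsf{tc}\,B_j$. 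It is worth stressing that, although $\mathsf{J}^+$ is not factive (there is no principle $[s]_\mathsf{tc}\,A\to A$), axioms~(vii) and~(viii) still deliver justification terms for both $B_j$ and $[s_j]_\mathsf{tc}\,B_j$ --- which is all the construction requires. These are then assembled by a pairing combinator: internalizing the propositional theorem $X\to(Y\to X\wedge Y)$, passing to the first sort by~(vii), and applying~(iv) twice turns justifications of $X$ and $Y$ into a justification of $X\wedge Y$. Iterating this along the structure of the conjunction $D$ yields a finite $\mathit{cs}_2\supseteq\mathit{cs}_1$ and a first-sort term $u$, whose free variables lie among $z_1,\dotsc,z_n$ and the variables of $s_1,\dotsc,s_m$, such that $\mathsf{J}^+_{\mathit{cs}_2}\vdash\Gamma\to[u]D$.

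It then remains to combine $[\mathsf{head}(t)](D\to C)$ with $[u]D$ by axiom~(iv) to obtain $[\mathsf{head}(t)\cdot u]C$, so that $\mathsf{J}^+_{\mathit{cs}_2}\vdash\Gamma\to[\mathsf{head}(t)\cdot u]C$. Reading each $s_j$ as the argument slot $y_j$, one sets $h(z_1,\dotsc,z_n,y_1,\dotsc,y_m):=\mathsf{head}(t)\cdot u$; since $t$ and all pairing constants are ground, $h$ depends only on the displayed variables, as required. For injectivity, every appeal to Lemma~\ref{internalization} chooses its fresh constants outside the current $\mathit{Con}$, so the successive constant specifications remain mutually injective and $\mathit{cs}_2$ is injective whenever $\mathit{cs}_0$ is.

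I expect the main difficulty to be the bookkeeping in the pairing step: carrying out the induction on the abbreviation-expanded structure of $D$ so that the free variables of $u$ stay confined to the displayed ones, while at the same time checking that the separate internalizations used for the various tautology instances contribute only fresh constants and hence preserve injectivity.
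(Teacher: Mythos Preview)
Your argument is correct, but the paper reaches the same conclusion by a shorter path. Both proofs begin by passing to a finite constant specification and invoking internalization, but the paper first \emph{curries} the hypothesis to
\[A_1\to(A_2\to\dotsb(B_1\to\dotsb([s_1]_\mathsf{tc}\,B_1\to\dotsb([s_m]_\mathsf{tc}\,B_m\to C)\dotsb),\]
internalizes this single formula once, passes to $[\mathsf{head}(t)](\dotsb)$ via axiom~(vii), and then runs a straight chain of applications of axiom~(iv) --- fed successively by $z_1,\dotsc,z_n$, then $\mathsf{head}(s_1),\dotsc,\mathsf{head}(s_m)$, then $\mathsf{tail}(s_1),\dotsc,\mathsf{tail}(s_m)$ --- arriving directly at the explicit term
\[h=(\dotsb(\mathsf{head}(t)\cdot z_1)\cdot\dotsb\cdot z_n)\cdot\mathsf{head}(y_1))\cdot\dotsb\cdot\mathsf{head}(y_m))\cdot\mathsf{tail}(y_1))\cdot\dotsb\cdot\mathsf{tail}(y_m).\]
Axioms~(vii) and~(viii) then discharge the premises $[\mathsf{head}(s_j)]B_j$ and $[\mathsf{tail}(s_j)][s_j]_\mathsf{tc}\,B_j$ from the available $[s_j]_\mathsf{tc}\,B_j$, exactly as in your step. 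This sidesteps your pairing combinator altogether: there is one call to Lemma~\ref{internalization} instead of one per conjunct of $D$, and the bookkeeping worry you flag --- tracking fresh constants through the iterated pairings while keeping the free variables of $u$ confined --- never arises. Your route is more modular in that it cleanly separates ``build a justification for $D$'' from ``apply the internalized implication'', but currying buys the paper a noticeably leaner argument with an explicit closed form for $h$.
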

\begin{proof}
Assume 
\[ \mathsf{J}^+ \vdash A_1\wedge\dotso \wedge A_n\wedge B_1\wedge\dotso \wedge  B_m
\wedge [s_1]_\mathsf{tc}\, B_1\wedge\dotso \wedge [s_m]_\mathsf{tc}\, B_m \to C.\]
Then this formula is provable in $\mathsf{J}^+_{\mathit{cs}_0}$ for some finite constant specification $\mathit{cs}_0$. Therefore, 
\[\mathsf{J}^+_{\mathit{cs}_0} \vdash A_1\to (A_2 \to \dotso (B_1\to  \dotso ([s_1]_\mathsf{tc}\, B_1\to \dotso ([s_m]_\mathsf{tc}\, B_m \to C)\dots).\]
From Lemma \ref{internalization}, there exist a finite superset $\mathit{cs}_1$ of $\mathit{cs}_0$ and a ground justification term $t$ such that
\[\mathsf{J}^+_{\mathit{cs}_1} \vdash [t]_\mathsf{tc} \, (A_1\to (A_2 \to \dotso (B_1\to  \dotso ([s_1]_\mathsf{tc}\, B_1\to \dotso ([s_m]_\mathsf{tc}\, B_m \to C)\dots)).\]
From Axiom (vii), it follows that
\[\mathsf{J}^+_{\mathit{cs}_1} \vdash [\mathsf{head}(t)] (A_1\to (A_2 \to \dotso (B_1\to  \dotso ([s_1]_\mathsf{tc}\, B_1\to \dotso ([s_m]_\mathsf{tc}\, B_m \to C)\dots)).\]
Applying Axiom (iv) successively, we obtain
\begin{multline*}
\mathsf{J}^+_{\mathit{cs}_1} \vdash [z_1]A_1\to ([z_2]A_2 \to \dotso ([\mathsf{head}(s_1)]B_1\to  \dotso ([\mathsf{tail}(s_1)][s_1]_\mathsf{tc}\, B_1\to \dotso \\([\mathsf{tail}(s_m)][s_m]_\mathsf{tc}\, B_m \to [h(z_1, \dotsc , z_n, s_1, \dotsc , s_m)] C)\dots),
\end{multline*}
where $h(z_1, \dotsc , z_n, y_1, \dotsc , y_m)$ is equal to
\[(\dots(\mathsf{head}(t)\cdot z_1) \cdot \dotso \cdot z_n) \cdot \mathsf{head}(y_1)) \cdot \dotso \cdot \mathsf{head}(y_m))\cdot \mathsf{tail}(y_1)) \cdot \dotso \cdot \mathsf{tail}(y_m).\]
Hence, the formula
\begin{multline*}
[z_1]A_1\wedge\dotso \wedge [z_n]A_n\wedge [\mathsf{head}(s_1)]B_1\wedge\dotso \wedge  [\mathsf{head}(s_m)]B_m
\wedge [\mathsf{tail}(s_1)][s_1]_\mathsf{tc}\, B_1\wedge\dotso \\
\wedge [\mathsf{tail}(s_m)][s_m]_\mathsf{tc}\, B_m \to [h(z_1, \dotsc , z_n, s_1, \dotsc , s_m)]C
\end{multline*}
is provable in $\mathsf{J}^+_{\mathit{cs}_1}$.
Applying Axiom (vii) and Axiom (viii), we conclude  
\[\mathsf{J}^+_{\mathit{cs}_1} \vdash [z_1]A_1\wedge\dotso \wedge [z_n]A_n\wedge [s_1]_\mathsf{tc}\, B_1\wedge\dotso \wedge [s_m]_\mathsf{tc}\, B_m \to [h(z_1, \dotsc , z_n, s_1, \dotsc , s_m)]C.\]
Note that the constant specifications $\mathit{cs}_0$ and $\mathit{cs}_1$ can be chosen to be injective if the original proof was injective. 
\end{proof}
\begin{lemma}\label{induction rule lemma}
If $\mathsf{J}^+ \vdash B \to [w] (A \wedge B)$, then there exists a justification term $t (x_0)$ depending only on $x_0$ such that $\mathsf{J}^+ \vdash B \to [t(w)]_\mathsf{tc} \, A$. Moreover, if the original proof is injective, the same holds for the later proof. 
\end{lemma}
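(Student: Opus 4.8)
The plan is to imitate the derived induction rule of $\mathsf{K}^+$ --- namely, that the induction axiom $\Box A \wedge \Box^+(A \to \Box A) \to \Box^+ A$ together with $\mathsf{nec}$ yields ``from $B \to \Box(A \wedge B)$ infer $B \to \Box^+ A$'' --- while keeping careful track of the witnessing terms, so that the resulting transitive-closure term involves the given first-sort term $w$ only through a single application of $\mathsf{ind}$. The key simplification is to work throughout with the one formula $A \wedge B$, applying Axiom (ix) to it and weakening to $A$ only at the very end; this avoids having to split $[w](A \wedge B)$ into separate justifications of $A$ and of $B$.

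First I would derive $\mathsf{J}^+ \vdash A \wedge B \to [w](A \wedge B)$ by composing the propositional tautology $A \wedge B \to B$ with the hypothesis $B \to [w](A \wedge B)$. Applying the internalization lemma (Lemma \ref{internalization}) to this theorem produces a finite constant specification and a ground term $r$ with
\[\mathsf{J}^+ \vdash [r]_\mathsf{tc} \, ((A \wedge B) \to [w](A \wedge B)).\]
Since $r$ is ground, it does not contain $w$, which is precisely what will make the final term depend on $x_0$ alone. Next I instantiate Axiom (ix) with the formula $A \wedge B$, the first-sort term $w$, and the second-sort term $r$, obtaining
\[\mathsf{J}^+ \vdash [w](A \wedge B) \wedge [r]_\mathsf{tc} \, ((A \wedge B) \to [w](A \wedge B)) \to [\mathsf{ind}(w,r)]_\mathsf{tc} \, (A \wedge B).\]
Combining this with the hypothesis (for the first conjunct) and with the internalized theorem above (for the second conjunct) by propositional reasoning gives $\mathsf{J}^+ \vdash B \to [\mathsf{ind}(w,r)]_\mathsf{tc} \, (A \wedge B)$.

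Finally I would internalize the tautology $(A \wedge B) \to A$ to obtain a ground term $g$ with $\mathsf{J}^+ \vdash [g]_\mathsf{tc} \, ((A \wedge B) \to A)$, and then use Axiom (vi) to weaken the consequent, yielding $\mathsf{J}^+ \vdash B \to [g \cdot \mathsf{ind}(w,r)]_\mathsf{tc} \, A$. Setting $t(x_0) := g \cdot \mathsf{ind}(x_0, r)$ then completes the construction, and this $t$ depends only on $x_0$ because both $g$ and $r$ are ground.

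The individual steps are routine; the one point I would watch most carefully is the bookkeeping guaranteeing that $t$ depends only on $x_0$. That guarantee rests entirely on the fact that Lemma \ref{internalization} always returns a \emph{ground} term, so that $w$ enters the final term exclusively through $\mathsf{ind}(w,r)$, everything else being assembled from ground terms and constants. The injectivity addendum is then immediate: each appeal to Lemma \ref{internalization} preserves injectivity and only introduces fresh constants, so if the proof of the hypothesis is injective, then so is the proof assembled here.
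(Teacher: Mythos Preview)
Your proof is correct and follows essentially the same route as the paper's: derive $A \wedge B \to [w](A\wedge B)$, internalize it to get a ground second-sort term, apply Axiom~(ix) to obtain $B \to [\mathsf{ind}(w,\,\cdot\,)]_\mathsf{tc}(A\wedge B)$, then internalize $A\wedge B \to A$ and use Axiom~(vi) to strip the conjunction. The paper tracks the constant specifications $\mathit{cs}_0\subseteq\mathit{cs}_1\subseteq\mathit{cs}_2$ a bit more explicitly for the injectivity clause, but your remark that each use of Lemma~\ref{internalization} preserves injectivity covers the same ground.
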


\begin{proof}
Assume $\mathsf{J}^+ \vdash B \to [w] (A \wedge B)$. Then $\mathsf{J}^+_{\mathit{cs}_0} \vdash B \to [w] (A \wedge B)$ for some finite constant specification $\mathit{cs}_0$. We have $\mathsf{J}^+_{\mathit{cs}_0} \vdash A \wedge B \to [w] (A \wedge B)$. By Lemma \ref{internalization}, there are a finite superset ${\mathit{cs}_1}$ of ${\mathit{cs}_0}$ and a ground term $s_0$ such that $\mathsf{J}^+_{\mathit{cs}_1} \vdash  [s_0]_\mathsf{tc} \, (A \wedge B \to [w] (A \wedge B))$. Hence, $\mathsf{J}^+_{\mathit{cs}_1} \vdash B \to ([w] (A \wedge B) \wedge [s_0]_\mathsf{tc} \, (A \wedge B \to [w] (A \wedge B)))$. Applying Axiom (ix), we obtain $\mathsf{J}^+_{\mathit{cs}_1} \vdash B \to [\mathsf{ind}(w,s_0)]_\mathsf{tc} \,  (A \wedge B))$. Besides, $\mathsf{J}^+_{\mathit{cs}_1} \vdash A \wedge B \to A$. From Lemma \ref{internalization}, there are a finite superset ${\mathit{cs}_2}$ of ${\mathit{cs}_1}$ and a ground term $s_1$ such that $\mathsf{J}^+_{\mathit{cs}_2} \vdash [s_1]_\mathsf{tc} \,  (A \wedge B \to A)$. Applying Axiom (vi), we obtain $\mathsf{J}^+_{\mathit{cs}_2} \vdash B \to [s_1 \cdot \mathsf{ind}(w,s_0)]_\mathsf{tc} \, A$. It remains to note that $\mathit{cs}_2$ can be chosen to be injective if $\mathit{cs}_0$ is injective.
\end{proof}

\section{A non-well-founded sequent calculus}
This section examines a sequent calculus for the logic $\mathsf{K}^+$, where non-well-founded proofs are allowed. The given system, denoted by $\mathsf{S}$, is a version of the calculus from \cite{Bucheli2010} adapted for the case of transitive closure. Below we provide a semantic proof that each theorem of $\mathsf{K}^+$ is provable in $\mathsf{S}$. We present the argument in full detail, although semantic proofs of the given sort are not new (see \cite{Bucheli2010} and \cite{Doczkal2012}). We also stress that the established connection between two calculi can be proved in a pure syntactic way (see Section 8 of \cite{Shamkanov2023}).

\textit{Sequents} are defined as expressions of the form $\Gamma \Rightarrow \Delta$, where $\Gamma$ and $\Delta$ are finite multisets of formulas. Multisets are often written without any curly braces, and the comma in the expression $\Gamma, \Delta$ means the multiset union. For a multiset of formulas $\Gamma = A_1,\dotsc, A_n$, we put $\Box \Gamma := \Box A_1,\dotsc, \Box A_n$ and $\Box^+ \Gamma := \Box^+ A_1,\dotsc, \Box^+ A_n$. If we remove all repetitions in a multiset $\Gamma$, then the resulting multiset is denoted by $\Gamma^s$.
For example, $\Gamma^s= p,q, \Box^+ (p \to q)$ if $\Gamma= p,p, p,q, \Box^+ (p \to q), \Box^+ (p \to q)$.


We denote the sequent calculus for the logic $\mathsf{K}^+$ by $\mathsf{S}$ and define its inference rules as follows:
\begin{gather*}
\AXC{}
\RightLabel{ ,}
\UIC{$\Gamma, p \Rightarrow p, \Delta $}
\DP  \qquad
\AXC{}
\RightLabel{ ,}
\UIC{$\Gamma, \bot \Rightarrow  \Delta $}
\DP 
\end{gather*}
\begin{gather*}
\AXC{$\Gamma , B \Rightarrow  \Delta$}
\AXC{$\Gamma \Rightarrow  A, \Delta$}
\LeftLabel{$\mathsf{\rightarrow_L}$}
\RightLabel{ ,}
\BIC{$\Gamma , A \rightarrow B \Rightarrow  \Delta$}
\DisplayProof \qquad
\AXC{$\Gamma, A \Rightarrow  B ,\Delta$}
\LeftLabel{$\mathsf{\rightarrow_R}$}
\RightLabel{ ,}
\UIC{$\Gamma \Rightarrow  A \rightarrow B ,\Delta$}
\DisplayProof 
\end{gather*}
\begin{gather*}
\AXC{$\Sigma, \Pi, \Box^+ \Pi \Rightarrow A$}
\LeftLabel{$\mathsf{\Box}$}
\RightLabel{ ,}
\UIC{$ \Upsilon, \Box \Sigma, \Box^+ \Pi \Rightarrow \Box A , \Lambda$}
\DisplayProof \qquad
\AXC{$\Sigma, \Pi, \Box^+ \Pi \Rightarrow A$}
\AXC{$\Sigma, \Pi, \Box^+ \Pi \Rightarrow \Box^+ A$}
\LeftLabel{$\Box^+$}
\RightLabel{ .}
\BIC{$ \Upsilon, \Box \Sigma, \Box^+ \Pi \Rightarrow  \Box^+ A ,\Lambda$}
\DisplayProof 
\end{gather*}
The last two inference rules of the sequent calculus are called \emph{modal rules}. 
For the modal rule ($\Box$) (or ($\Box^+$)), 
the formula $\Box A$ (or $\Box^+ A$) is the \emph{principal formula} of the corresponding inference. 


An \emph{$\infty$-proof} is a possibly infinite tree whose nodes are marked by sequents and that is constructed according to the rules of the sequent calculus. Besides, 
every infinite branch in an $\infty$-proof must contain a tail satisfying the conditions: all applications of the rule ($\mathsf{\Box^+}$) in the tail have the same principal formula $\Box^+ A$; the tail passes through the right premise of the rule ($\mathsf{\Box^+}$) infinitely many times; the tail doesn't pass through the left premise of the rule ($\mathsf{\Box^+}$); there are no applications of the rule ($\Box$) in the tail. 

An $\infty$-proof is called \emph{regular} if it contains only finitely many non-isomorphic subtrees with respect to the marking of sequents.
A sequent $\Gamma \Rightarrow \Delta$ is \emph{provable in $\mathsf{S}$} if there is a regular $\infty$-proof $\pi$ with the root marked by $\Gamma \Rightarrow \Delta$. 

For example, consider the regular $\infty$-proof
\begin{gather*}\label{Example}
\AXC{$p , H, \Box^+ H  \Rightarrow   p$}
\AXC{$\pi$}
\noLine
\UIC{$\vdots$}
\noLine
\UIC{$p ,  \Box p, \Box^+ H  \Rightarrow   \Box^+ p$}
\AXC{$p ,   \Box^+ H  \Rightarrow    p, \Box^+ p$}
\LeftLabel{$\mathsf{\rightarrow_L}$}
\BIC{$p , H, \Box^+ H  \Rightarrow   \Box^+ p$}
\LeftLabel{$\mathsf{\Box}^+$}
\RightLabel{ ,}
\BIC{$p, \Box p , \Box^+ H  \Rightarrow  \Box^+ p$}
\DisplayProof 
\end{gather*}
where $H= p \rightarrow \Box p$ and the subtree $\pi$ is isomorphic to the whole $\infty$-proof. Here the unique infinite branch passes through alternate applications of inference rules ($\mathsf{\rightarrow_L}$) and ($\mathsf{\Box^+}$) infinitely many times. If we consider the given branch as its own tail, then we immediately see that this branch satisfies the required conditions on infinite branches in $\infty$-proofs.  
 
We call a sequent $\Gamma \Rightarrow \Delta$ \emph{valid} if the formula $\bigwedge\Gamma\to \bigvee\Delta$ is valid in any bimodal Kripke frame $(W,R,R^+)$, where $R^+$ is the transitive closure of $R$. In the rest of the section, we show that any valid sequent is provable in $\mathsf{S}$.

Let us consider the following auxiliary rules ($\mathsf{\to_{L1}}$) and ($\mathsf{\to_{L2}}$)  
\begin{gather*}
\AXC{$\Gamma , B \Rightarrow  \Delta$}
\LeftLabel{$\mathsf{\rightarrow_{L1}}$}
\RightLabel{ ,}
\UIC{$\Gamma , A \rightarrow B \Rightarrow  \Delta$}
\DisplayProof \qquad
\AXC{$\Phi \Rightarrow  C, \Psi$}
\LeftLabel{$\mathsf{\rightarrow_{L2}}$}
\RightLabel{}
\UIC{$\Phi , C \rightarrow D \Rightarrow  \Psi$}
\DisplayProof 
\end{gather*}
with the side conditions: the sequent $\Gamma \Rightarrow  A, \Delta$ (the sequent $\Phi , D \Rightarrow   \Psi$) is provable in $\mathsf{S}$.

Furthermore, we consider the rule ($\boxtimes$) 
\begin{gather*}
\AXC{$\textsc{Prem}_1$}
\AXC{$\textsc{Prem}_2$}
\LeftLabel{$\boxtimes$}
\RightLabel{ ,}
\BIC{$ \Upsilon, \Box \Sigma, \Box^+ \Pi \Rightarrow \Box A_1, \dotsc\Box A_n, \Box^+ B_1 \dotsc \Box^+ B_m,\Lambda$}
\DisplayProof 
\end{gather*}
where the multisets $\Upsilon$ and $\Lambda$ contain only propositional variables and the constant $\bot$. Besides, $\textsc{Prem}_1$ and $\textsc{Prem}_2$ are two (possibly empty) groups of premises such that
\begin{itemize}
    \item $ \textsc{Prem}_1$ contains $\Sigma^s, \Pi^s, \Box^+ \Pi^s \Rightarrow A_i$ for each $1\leqslant i \leqslant n$,
    \item $\textsc{Prem}_2$ contains one or both of the sequents 
    \[\Sigma^s, \Pi^s, \Box^+ \Pi^s \Rightarrow B_j, \qquad \Sigma^s, \Pi^s, \Box^+ \Pi^s \Rightarrow \Box^+ B_j\] for each $1\leqslant j\leqslant m$.
\end{itemize}
In addition, the rule ($\boxtimes$) has the side condition: any sequent of the form $\Sigma^s, \Pi^s, \Box^+ \Pi^s \Rightarrow B_j$ or $\Sigma^s, \Pi^s, \Box^+ \Pi^s \Rightarrow \Box^+ B_j$ that doesn't belong to $\textsc{Prem}_2$ is provable in $\mathsf{S}$.

A sequent $\Gamma \Rightarrow \Delta$ is called \emph{saturated} if $\Gamma$ and $\Delta$ do not contain formulas of the form $A \to B$. 
A \emph{saturation tree} 
is a finite tree of unprovable sequents constructed according to the rules ($\mathsf{\to_{R}}$), ($\mathsf{\to_{L}}$), ($\mathsf{\to_{L1}}$) and ($\mathsf{\to_{L2}}$),
where 
all leaves are marked by saturated sequents.
\begin{lemma}\label{Saturation lemma}
For any unprovable sequent $\Gamma \Rightarrow \Delta$, there is a saturation tree with the root marked by $\Gamma \Rightarrow \Delta$.  
\end{lemma}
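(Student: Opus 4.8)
The plan is to build the saturation tree top-down by repeatedly decomposing implications, maintaining throughout the invariant that every sequent occurring in the tree is unprovable in $\mathsf{S}$. Formally, I would argue by induction on the measure $\mu(\Gamma \Rightarrow \Delta) := \sum_{A \in \Gamma} \sz(A) + \sum_{B \in \Delta} \sz(B)$, where occurrences are counted with multiplicity. If $\Gamma \Rightarrow \Delta$ is already saturated, the one-node tree consisting of its root is a saturation tree, since its only leaf is the saturated, unprovable sequent $\Gamma \Rightarrow \Delta$. Otherwise $\Gamma$ or $\Delta$ contains at least one formula of the form $A \to B$, and I decompose one such occurrence, apply the induction hypothesis to the resulting (unprovable, strictly smaller) premises, and attach the obtained subtrees below the corresponding rule.

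The key step is verifying that a suitable rule always produces unprovable premises. Suppose first that the decomposed implication occurs in the succedent, say $\Delta = A \to B, \Delta'$. Applying $(\mathsf{\rightarrow_R})$ yields the single premise $\Gamma, A \Rightarrow B, \Delta'$, which must be unprovable: otherwise a single application of $(\mathsf{\rightarrow_R})$ would prove $\Gamma \Rightarrow A \to B, \Delta'$, contrary to assumption. Suppose instead that the implication occurs in the antecedent, say $\Gamma = \Gamma', A \to B$. Consider the two sequents $\Gamma', B \Rightarrow \Delta$ and $\Gamma' \Rightarrow A, \Delta$. They cannot both be provable, for then an application of $(\mathsf{\rightarrow_L})$ would prove $\Gamma', A \to B \Rightarrow \Delta$. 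Hence at least one of them is unprovable, and I distinguish three exhaustive cases. If both are unprovable, I apply $(\mathsf{\rightarrow_L})$. If only $\Gamma' \Rightarrow A, \Delta$ is provable, I apply $(\mathsf{\rightarrow_{L1}})$, whose side condition is exactly the provability of $\Gamma' \Rightarrow A, \Delta$ and whose retained premise $\Gamma', B \Rightarrow \Delta$ is then unprovable. If only $\Gamma', B \Rightarrow \Delta$ is provable, I apply $(\mathsf{\rightarrow_{L2}})$, whose side condition is exactly the provability of $\Gamma', B \Rightarrow \Delta$ and whose retained premise $\Gamma' \Rightarrow A, \Delta$ is then unprovable. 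In every case the retained premises are unprovable, so the induction hypothesis applies.

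Termination and finiteness then follow from the measure. Each of the four rules removes one occurrence of $A \to B$ and introduces, in each premise, at most its immediate subformulas $A$ and $B$; since $\sz(A) + \sz(B) = \sz(A \to B) - 1$, the value of $\mu$ strictly decreases from conclusion to premise. As $\mu$ is a non-negative integer, every branch of the constructed tree has length at most $\mu(\Gamma \Rightarrow \Delta)$, and since each rule has at most two premises the tree is finitely branching; hence it is finite. Its internal nodes are unprovable by construction, and the construction halts precisely at the saturated sequents, so the leaves are exactly the saturated ones.

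I expect the only genuine subtlety to be the case analysis for an antecedent implication: the whole purpose of the auxiliary rules $(\mathsf{\rightarrow_{L1}})$ and $(\mathsf{\rightarrow_{L2}})$ is to let us discard whichever of the two $(\mathsf{\rightarrow_L})$-premises happens to be provable and keep the necessarily unprovable one, and the argument rests on the elementary but essential observation that the two premises of $(\mathsf{\rightarrow_L})$ cannot both be provable once the conclusion is unprovable. Everything else — the size measure, the finite-branching argument, and the trivial saturated base case — is routine.
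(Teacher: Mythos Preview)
Your proposal is correct and follows essentially the same approach as the paper: both argue by induction on the total size of the sequent (your $\mu$), treat the saturated case as a one-node tree, and in the inductive step decompose an implication using $(\mathsf{\rightarrow_R})$ for the succedent case and one of $(\mathsf{\rightarrow_L})$, $(\mathsf{\rightarrow_{L1}})$, $(\mathsf{\rightarrow_{L2}})$ for the antecedent case according to which premises are unprovable. Your write-up is in fact slightly more explicit than the paper's about why the premises remain unprovable and why the tree is finite.
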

\begin{proof}
For a sequent $\Phi \Rightarrow \Psi$, we define its size as the sum of sizes of all formulas from $\Phi$ and $\Psi$ with respect to repetitions. 

Now assume we have an unprovable sequent $\Gamma \Rightarrow \Delta$. We prove that there exists the required saturation tree for $\Gamma \Rightarrow \Delta$ by induction on the size of $\Gamma \Rightarrow \Delta$.

If the sequent $\Gamma \Rightarrow \Delta$ is saturated, then the tree consisting of one node marked by $\Gamma \Rightarrow \Delta$ is a saturation tree for $\Gamma \Rightarrow \Delta$. Otherwise, there is a formula $(A\to B)\in \Gamma \cup \Delta$. 

Suppose $\Delta= A\to B, \Delta^\prime$. Then the sequent $\Gamma \Rightarrow \Delta$ can be obtained from an unprovable sequent $\Gamma, A \Rightarrow B, \Delta^\prime$ by an application of the rule ($\mathsf{\to_{R}}$). 
In addition, the size of $\Gamma, A \Rightarrow B, \Delta^\prime$ is strictly less than the size of $\Gamma \Rightarrow \Delta$. Thus, by the induction hypothesis for $\Gamma, A \Rightarrow B, \Delta^\prime$, there exists a saturation tree $\xi$ for the sequent $\Gamma, A \Rightarrow B, \Delta^\prime$. We  see that   
\begin{gather*}
\AXC{$\xi$}
\noLine
\UIC{$\vdots$}
\noLine
\UIC{$\Gamma, A \Rightarrow B, \Delta^\prime$}
\LeftLabel{$\mathsf{\to_{R}}$}
\RightLabel{ }
\UIC{$ \Gamma \Rightarrow A\to B, \Delta^\prime$}
\DisplayProof 
\end{gather*}
is a saturation tree with the root marked by $\Gamma \Rightarrow \Delta$.

Suppose $\Gamma=  \Gamma^\prime, A\to B$. Then the sequent $\Gamma \Rightarrow \Delta$ can be obtained from $\Gamma^\prime, B \Rightarrow  \Delta$ and $\Gamma^\prime \Rightarrow A, \Delta$ by an application of the rule ($\mathsf{\to_{L}}$). Consequently, one or both of these sequents are unprovable. Note that the sizes of $\Gamma^\prime, B \Rightarrow  \Delta$ and $\Gamma^\prime \Rightarrow A, \Delta$ are strictly less than the size of $\Gamma \Rightarrow \Delta$. Hence, by the induction hypothesis, there exists a saturation tree for one or both of these sequents.  
Similarly to the previous case, we obtain a saturation tree for $\Gamma \Rightarrow \Delta$ from the given saturation tree(s) by an application of the rule ($\mathsf{\to_{L1}}$), ($\mathsf{\to_{L2}}$) or ($\mathsf{\to_{L}}$).  
\end{proof}

A \emph{refutation tree} is a tree of unprovable sequents constructed according to the rules ($\mathsf{\to_{R}}$), ($\mathsf{\to_{L}}$), ($\mathsf{\to_{L1}}$), ($\mathsf{\to_{L2}}$) and ($\boxtimes$). A refutation tree is called \emph{regular} if it contains only finitely many non-isomorphic subtrees with respect to the marking of sequents.

\begin{lemma}\label{from unprovability to refutability}
For any unprovable sequent $\Gamma \Rightarrow \Delta$, there exists a regular refutation tree with the root marked by $\Gamma \Rightarrow \Delta$. 
\end{lemma}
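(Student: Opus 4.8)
The plan is to build the refutation tree corecursively, alternating finite saturation phases with single applications of the big-step rule $(\boxtimes)$, and to secure regularity by keeping every sequent inside the finite subformula closure of the original sequent. I start from the unprovable sequent $\Gamma \Rightarrow \Delta$ and invoke Lemma \ref{Saturation lemma} to obtain a (fixed, canonically chosen) saturation tree whose leaves are saturated and unprovable. Since a saturated sequent contains no implications, each such leaf $L$ has the shape $\Upsilon, \Box\Sigma, \Box^+\Pi \Rightarrow \Box A_1,\dots,\Box A_n,\Box^+ B_1,\dots,\Box^+ B_m,\Lambda$ with $\Upsilon,\Lambda$ built from propositional variables and $\bot$; as $L$ is unprovable it is not an axiom, so $\bot\notin\Upsilon$ and no variable is shared between $\Upsilon$ and $\Lambda$. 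If $n=m=0$ the branch terminates at this purely propositional unprovable leaf; otherwise $L$ is precisely of the form of a conclusion of $(\boxtimes)$, so I apply that rule and then recurse on each premise.

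The heart of the argument is to show that $(\boxtimes)$ can be instantiated at $L$ so that every retained premise is again unprovable while the side condition holds. This rests on the observation that a single application of $(\Box)$ or $(\Box^+)$, with all the remaining boxed succedent formulas absorbed into the unrestricted context $\Lambda$ of those rules, already derives $L$ from one well-chosen premise. Concretely, were some $\Sigma^s,\Pi^s,\Box^+\Pi^s \Rightarrow A_i$ provable, then $(\Box)$ with principal formula $\Box A_i$ would prove $L$; hence, $L$ being unprovable, \emph{every} first-group premise is unprovable and I place all of them in $\textsc{Prem}_1$. Likewise, were both $\Sigma^s,\Pi^s,\Box^+\Pi^s \Rightarrow B_j$ and $\Sigma^s,\Pi^s,\Box^+\Pi^s \Rightarrow \Box^+ B_j$ provable for some $j$, then $(\Box^+)$ with principal formula $\Box^+ B_j$ would prove $L$; hence at least one of the two is unprovable, and I put the unprovable one(s) into $\textsc{Prem}_2$, discarding the provable ones. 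By construction the omitted sequents are provable, so the side condition of $(\boxtimes)$ is satisfied and all premises are unprovable. Here I use that provability in $\mathsf{S}$ is insensitive to repetitions of formulas, so passing between $\Sigma$ and $\Sigma^s$ (and similarly for $\Pi$) is harmless.

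It remains to secure regularity. Every formula arising in the construction is a subformula of a formula of $\Gamma \Rightarrow \Delta$: saturation only decomposes implications into their subformulas, while the premises of $(\boxtimes)$ are built from the immediate constituents of the principal boxed formulas of $L$ together with the $\Box^+$-formulas already present. Thus all sequents in the tree range over the finite subformula closure of $\Gamma \Rightarrow \Delta$. Working modulo repetitions (legitimate by the previous remark) and fixing canonical choices both in the saturation procedure of Lemma \ref{Saturation lemma} and in the selection of the premise groups $\textsc{Prem}_1,\textsc{Prem}_2$ of $(\boxtimes)$, the subtree rooted at any node is determined solely by the sequent labelling that node. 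Since only finitely many sequents over the closure occur, the tree has finitely many pairwise non-isomorphic subtrees, i.e. it is regular.

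I expect the second paragraph to be the main obstacle: justifying that the unprovability of $L$ propagates to a legitimate instance of $(\boxtimes)$ with all retained premises unprovable is exactly where the soundness of the big-step rule, read contrapositively, must be combined with the admissibility of the structural manipulations relating $\Sigma$ to $\Sigma^s$.
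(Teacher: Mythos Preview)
Your proposal is correct and follows essentially the same approach as the paper: saturate using Lemma~\ref{Saturation lemma}, extend each saturated leaf by a $(\boxtimes)$ application whose retained premises are unprovable (arguing contrapositively via $(\Box)$ and $(\Box^+)$), recurse, and derive regularity from the finite subformula closure. Your second paragraph actually spells out the $(\boxtimes)$ step more carefully than the paper, which compresses it to the single sentence ``any application of the rule $(\Box)$ or $(\Box^+)$ that draws $\Phi_a \Rightarrow \Psi_a$ must contain an unprovable sequent among its premises.''

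One small point of comparison on regularity: the paper makes the argument sharper by fixing in advance the finite set $S$ consisting of $\Gamma \Rightarrow \Delta$ together with all candidate $(\boxtimes)$-premises $\Sigma^s,\Pi^s,\Box^+\Pi^s \Rightarrow C$ built from subformulas, then choosing one saturation tree $\xi_\alpha$ per $\alpha\in S$ and observing that the corecursive tree is assembled from finitely many finite blocks $\delta_\alpha$. Your formulation ``the subtree rooted at any node is determined solely by the sequent labelling that node'' is only literally true if the saturation procedure is step-by-step deterministic (not merely one fixed tree per starting sequent); otherwise an internal node of a saturation phase need not determine its subtree by its sequent alone. This is easily repaired either by making the saturation step-by-step canonical, or by adopting the paper's block-based bookkeeping. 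Also, in the $n=m=0$ case the paper still records a $(\boxtimes)$ application with empty premise groups rather than terminating the branch at the bare saturated leaf, but this is a cosmetic difference.
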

\begin{proof}
Assume we have an unprovable sequent $\Gamma \Rightarrow \Delta$. Let $\textit{Sub}\: (\Gamma \Rightarrow \Delta)$ be the set of all subformulas of the formulas from $\Gamma \cup \Delta$. Let $S_0$ be the set of unprovable sequents of the form $\Sigma^s, \Pi^s, \Box^+ \Pi^s \Rightarrow C$ (or $\Sigma^s, \Pi^s, \Box^+ \Pi^s \Rightarrow \Box^+C$), where $\Sigma^s \subset \textit{Sub}\:(\Gamma \Rightarrow \Delta)$, $ \Pi^s \subset \textit{Sub}\:(\Gamma \Rightarrow \Delta)$ and $ C \in \textit{Sub}\:(\Gamma \Rightarrow \Delta)$ (or $ \Box^+ C \in \textit{Sub}\:(\Gamma \Rightarrow \Delta)$). We put $S:= S_0\cup \{\Gamma \Rightarrow \Delta\} $. Notice that $S$ if finite.

Applying Lemma \ref{Saturation lemma}, for any sequent $\alpha$ from $S$, we fix a saturation tree $\xi_\alpha$ with the root marked by $\alpha$. Notice that each leaf $a$ of the saturation tree $\xi_\alpha$ is marked by a saturated unprovable sequent $\Phi_a \Rightarrow \Psi_a$, where $\Phi^s_a \subset \textit{Sub}\:(\Gamma \Rightarrow \Delta)$ and $\Psi^s_a \subset \textit{Sub}\:(\Gamma \Rightarrow \Delta)$. Since $\Phi_a \Rightarrow \Psi_a$ is unprovable, any application of the rule ($\Box$) or ($\Box^+$) that draws $\Phi_a \Rightarrow \Psi_a$ must contain an unprovable sequent among its premises. It follows that $\Phi_a \Rightarrow \Psi_a$ can be obtained from unprovable sequents by an application of the rule ($\boxtimes$). Moreover, this application is uniquely determined. 

For $\alpha\in S$, let $\delta_\alpha$ be the tree of sequents obtained from $\xi_\alpha$ by extending each leaf of $\xi_\alpha$ with the corresponding application of ($\boxtimes$). We see that all premises of all application of ($\boxtimes$) in $\delta_\alpha$ belong to $S$. Now, starting from the root of $\delta_{\Gamma \Rightarrow \Delta}$ and travelling upwards, we successively extend each premise $\alpha$ of ($\boxtimes$) with the tree $\delta_\alpha$ and define a refutation tree for $\Gamma \Rightarrow \Delta$ by co-recursion.

Since $S$ is finite, the obtained refutation tree is regular.  
\end{proof}

\begin{lemma}\label{Box+ falsification lemma}
In any regular refutation tree with the root marked by $\Gamma \Rightarrow \Box^+ C, \Delta$, there is an application of the rule ($\boxtimes$) with a premise of the form $\Theta \Rightarrow C$. 
\end{lemma}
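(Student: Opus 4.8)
The plan is to argue by contradiction. Suppose the given regular refutation tree $\tau$, with root $\Gamma \Rightarrow \Box^+ C, \Delta$, contains no application of $(\boxtimes)$ having a premise of the form $\Theta \Rightarrow C$. From $\tau$ I will manufacture a regular, valid $\infty$-proof of $\Gamma \Rightarrow \Box^+ C, \Delta$; since every sequent occurring in a refutation tree is by definition unprovable in $\mathsf{S}$, this contradicts the unprovability of the root and proves the lemma.

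First I would track the displayed occurrence of $\Box^+ C$ downward through $\tau$. Because $\Box^+ C$ is not an implication, none of the saturation rules $(\to_R)$, $(\to_L)$, $(\to_{L1})$, $(\to_{L2})$ can take it as principal, so it is copied unchanged into the relevant premise(s); hence every node reached this way still carries $\Box^+ C$ in its succedent. When such a node is saturated, the rule applied to it is $(\boxtimes)$, and since the succedent of a saturated sequent consists only of propositional variables, $\bot$, and modal formulas, $\Box^+ C$ must occur among the principal formulas $\Box^+ B_1, \dots, \Box^+ B_m$, say $B_{j_0} = C$. By the standing assumption the premise $\Sigma^s, \Pi^s, \Box^+\Pi^s \Rightarrow C$ is absent from $\textsc{Prem}_2$; the side condition of $(\boxtimes)$ then guarantees that this sequent is provable in $\mathsf{S}$, while the alternative premise $\Sigma^s, \Pi^s, \Box^+\Pi^s \Rightarrow \Box^+ C$ must be present and again carries $\Box^+ C$.

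Next I would transform this ``$\Box^+ C$-tracking'' part of $\tau$ into a genuine $\mathsf{S}$-derivation, plugging in fixed proofs for the discharged side conditions. Applications of $(\to_{L1})$ and $(\to_{L2})$ become full applications of $(\to_L)$ whose extra premise is filled with the $\mathsf{S}$-proof supplied by the side condition; each relevant application of $(\boxtimes)$ becomes a genuine application of $(\Box^+)$ with principal formula $\Box^+ C$, whose left premise $\Rightarrow C$ is filled with the $\mathsf{S}$-proof guaranteed above and whose right premise $\Rightarrow \Box^+ C$ continues the recursion. Here the remaining principal formulas $\Box A_i$ and $\Box^+ B_{j \neq j_0}$ are absorbed into the unrestricted side context $\Lambda$ permitted by the $(\Box^+)$-rule, and admissibility of weakening reconciles the contexts $\Sigma,\Pi,\Box^+\Pi$ with their duplicate-free versions $\Sigma^s,\Pi^s,\Box^+\Pi^s$. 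Defining this tree by co-recursion over $\tau$ yields an $\infty$-proof $\pi$ with root $\Gamma \Rightarrow \Box^+ C, \Delta$.

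Finally I would verify that $\pi$ is regular and valid. Regularity follows because all nodes of $\tau$ range over the fixed finite set $S$ from Lemma \ref{from unprovability to refutability}, so only finitely many distinct side proofs are ever needed and $\tau$ itself has finitely many subtrees up to isomorphism. For validity, the only infinite branches are those that eventually enter a plugged-in side proof (each a legitimate $\infty$-proof) and those that remain on the tracking skeleton forever; along the latter, saturation phases are finite, so the branch passes through infinitely many $(\Box^+)$-applications, every one with the same principal formula $\Box^+ C$, always entering the right premise, never the left, and with no application of $(\Box)$ at all, which is exactly the required trace condition. Thus $\pi$ witnesses $\mathsf{S} \vdash \Gamma \Rightarrow \Box^+ C, \Delta$, contradicting unprovability. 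I expect this last verification to be the main obstacle: ensuring the tracking branch meets all four clauses of the infinite-branch condition and that regularity survives the insertion of the side proofs, whereas the bookkeeping around duplicate formulas (the operation $\Gamma^s$) is a routine point handled by weakening.
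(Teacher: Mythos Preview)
Your proposal is correct and follows essentially the same strategy as the paper: argue by contradiction, track the occurrence of $\Box^+ C$ through the refutation tree, observe that at every relevant $(\boxtimes)$-application the side condition makes the $\Rightarrow C$ premise provable while the $\Rightarrow \Box^+ C$ premise must be present, and then convert the pruned tree into a regular $\infty$-proof by completing the one-premise rules with their provable side sequents.

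Two small remarks. First, your appeal to ``the fixed finite set $S$ from Lemma~\ref{from unprovability to refutability}'' is misplaced: the lemma is stated for an \emph{arbitrary} regular refutation tree, not necessarily one produced by that construction. The paper instead uses regularity of $\tau$ directly to conclude that only finitely many distinct side sequents arise, and hence only finitely many distinct plugged-in proofs are needed; you should argue the same way. Second, the detour through admissibility of weakening to reconcile $\Sigma,\Pi,\Box^+\Pi$ with $\Sigma^s,\Pi^s,\Box^+\Pi^s$ is unnecessary: in the $(\Box^+)$-rule of $\mathsf{S}$ the side multisets $\Upsilon,\Lambda$ in the conclusion are unrestricted, so the duplicate boxed formulas can simply be absorbed there.
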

\begin{proof}
Assume we have a regular refutation tree $\delta$ with the root marked by $\Gamma \Rightarrow \Box^+ C, \Delta$. We prove the required assertion by \emph{reductio ad absurdum}.

Suppose, in the tree $\delta$, there is no application of the rule ($\boxtimes$) with a premise of the form $\Theta \Rightarrow C$. 
If we consider any application of the rule ($\boxtimes$) from $\delta$
\begin{gather*}
\AXC{$\textsc{Prem}_1$}
\AXC{$\textsc{Prem}_2$}
\LeftLabel{$\boxtimes$}
\RightLabel{ ,}
\BIC{$ \Upsilon, \Box \Sigma, \Box^+ \Pi \Rightarrow \Box A_1, \dotsc\Box A_n, \Box^+ B_1 \dotsc \Box^+ B_m, \Box^+ C,\Lambda$}
\DisplayProof 
\end{gather*}
where the succedent of the conclusion contains $\Box^+ C$, then we see that $\textsc{Prem}_2$ must contain the premise $\Sigma^s, \Pi^s, \Box^+ \Pi^s \Rightarrow \Box^+ C$ since it can not contain the sequent $\Sigma^s, \Pi^s, \Box^+ \Pi^s \Rightarrow C$. From the side condition for ($\boxtimes$), we also see that the sequent $\Sigma^s, \Pi^s, \Box^+ \Pi^s \Rightarrow C$ is provable in $\mathsf{S}$. Also, we note that, for any application of the rule ($\mathsf{\to_{R}}$), ($\mathsf{\to_{L}}$), ($\mathsf{\to_{L1}}$) or ($\mathsf{\to_{L2}}$), the succedent of each premise contains $\Box^+ C$ whenever the succedent of the conclusion contains $\Box^+ C$. 

Now we define the tree of sequents $\delta^\prime$ from the tree $\delta$ by travelling along $\delta$ from conclusions to premises and prunning each application of the rule ($\boxtimes$) of the form
\begin{gather*}
\AXC{$\textsc{Prem}_1$}
\AXC{$\textsc{Prem}_2$}
\LeftLabel{$\boxtimes$}
\RightLabel{ }
\BIC{$ \Upsilon, \Box \Sigma, \Box^+ \Pi \Rightarrow \Box A_1, \dotsc\Box A_n, \Box^+ B_1 \dotsc \Box^+ B_m,\Box^+ C,\Lambda$}
\DisplayProof 
\end{gather*}
to 
\begin{gather*}
\AXC{$\Sigma^s, \Pi^s, \Box^+ \Pi^s \Rightarrow \Box^+ C$}
\LeftLabel{$\boxtimes^\prime$}
\RightLabel{ .}
\UIC{$\Upsilon, \Box \Sigma, \Box^+ \Pi \Rightarrow \Box A_1, \dotsc\Box A_n, \Box^+ B_1 \dotsc \Box^+ B_m,\Box^+ C, \Lambda$}
\DP
\end{gather*}
We see that the succedent of each sequent from $\delta^\prime$ contains $\Box^+ C$ and there remain no applications of the rule ($\boxtimes$) in $\delta^\prime$. In addition, since the refutation tree $\delta$ is regular, the obtained tree $\delta^\prime$ contains only finitely many non-isomorphic subtrees with respect to the marking of sequents. 

For any application of the rule ($\mathsf{\to_{L1}}$) or ($\mathsf{\to_{L2}}$) in the tree $\delta^\prime$
\begin{gather*}
\AXC{$\Phi , B \Rightarrow  \Psi$}
\LeftLabel{$\mathsf{\rightarrow_{L1}}$}
\RightLabel{ ,}
\UIC{$\Phi , A \rightarrow B \Rightarrow  \Psi$}
\DisplayProof \qquad
\AXC{$\Phi \Rightarrow  A, \Psi$}
\LeftLabel{$\mathsf{\rightarrow_{L2}}$}
\RightLabel{ ,}
\UIC{$\Phi , A \rightarrow B \Rightarrow  \Psi$}
\DisplayProof 
\end{gather*}
from the side conditions of the rules, we see that the sequent $\Phi \Rightarrow  A, \Psi$ or $\Phi , B \Rightarrow  \Psi$ is provable in $\mathsf{S}$. Also, we see that, for any transformed application of the rule ($\boxtimes$) in the tree $\delta^\prime$ 
\begin{gather*}
\AXC{$\Sigma^s, \Pi^s, \Box^+ \Pi^s \Rightarrow \Box^+ C$}
\LeftLabel{$\boxtimes^\prime$}
\RightLabel{ ,}
\UIC{$\Upsilon, \Box \Sigma, \Box^+ \Pi \Rightarrow \Box A_1, \dotsc\Box A_n, \Box^+ B_1 \dotsc \Box^+ B_m,\Box^+ C, \Lambda$}
\DP
\end{gather*}
the sequent $\Sigma^s, \Pi^s, \Box^+ \Pi^s \Rightarrow C$ is provable in $\mathsf{S}$.
Since, in the tree $\delta^\prime$, there are only finitely many (non-identical) applications of the rule ($\mathsf{\to_{L1}}$), ($\mathsf{\to_{L2}}$) or ($\boxtimes^\prime$), we have finitely many corresponding provable sequents of the form $\Phi \Rightarrow  A, \Psi$, $\Phi , B \Rightarrow  \Psi$ or $\Sigma^s, \Pi^s, \Box^+ \Pi^s \Rightarrow C$.

Now we transform each application of the rule ($\mathsf{\to_{L1}}$), ($\mathsf{\to_{L2}}$) or ($\boxtimes^\prime$) in the tree $\delta^\prime$ into an application of ($\mathsf{\to_{L}}$) or ($\Box^+$) by adding the missing premise of the form $\Phi \Rightarrow  A, \Psi$, $\Phi , B \Rightarrow  \Psi$ or $\Sigma^s, \Pi^s, \Box^+ \Pi^s \Rightarrow C$ and extending this premise with a regular $\infty$-proof. If we extend identical premises with identical regular $\infty$-proofs, then we obtain a regular $\infty$-proof with the root marked by $\Gamma \Rightarrow \Box^+ C, \Delta$. 
However, the sequent $\Gamma \Rightarrow \Box^+ C, \Delta$ occurs in the refutation tree $\delta$ and must be unprovable (by the definition of refutation tree), which is a contradiction.

Consequently, there exists an application of the rule ($\boxtimes$) in the refutation tree $\delta$ with a premise of the form $\Theta \Rightarrow C$. 
\end{proof}


\begin{lemma}\label{from refutability to invalidity}
If there is a regular refutation tree with the root marked by $\Gamma \Rightarrow \Delta$, then $\Gamma \Rightarrow \Delta$ is invalid.
\end{lemma}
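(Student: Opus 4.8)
The plan is to read an explicit countermodel off a regular refutation tree $\delta$ with root $\Gamma \Rightarrow \Delta$, thereby exhibiting a $\mathsf{K}^+$-frame and a world at which $\bigwedge\Gamma\to\bigvee\Delta$ fails. I take as \emph{worlds} the nodes of $\delta$ marked by \emph{saturated} sequents; we may assume each such node is the conclusion of an application of $(\boxtimes)$, expanding it otherwise while staying over the finite subformula set of Lemma~\ref{from unprovability to refutability} and so preserving regularity. A world $w$ carries a sequent $\Phi_w \Rightarrow \Psi_w$, and I set $w R w'$ whenever $w'$ is a saturated leaf of the saturation block growing out of one of the premises of the $(\boxtimes)$-application whose conclusion is $w$; I let $R^+$ be the transitive closure of $R$, so that $(W,R,R^+)$ is a $\mathsf{K}^+$-frame by construction. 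The valuation is read off the antecedents: $w \Vdash p$ iff $p \in \Phi_w$. As every sequent of $\delta$ is unprovable, no node is an axiom, so $\bot \notin \Phi_w$ and no atom lies in both $\Phi_w$ and $\Psi_w$; hence $p \in \Psi_w$ forces $w \nVdash p$.

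First I would record that the propositional rules locally preserve forcing: reading $(\to_R),(\to_L),(\to_{L1}),(\to_{L2})$ upwards inside a saturation block, every formula of the antecedent of the block-root is forced \emph{true} at each saturated leaf, and every formula of the succedent is forced \emph{false} there. A non-implication is never decomposed and simply persists to the leaves; an implication is handled via the two $(\to_L)$-branches (one pushes the premise into the antecedent, the other the hypothesis into the succedent) and the single-premise rules, each time using the induced values of the strictly smaller components. With this in hand the core is a \emph{truth lemma}, by induction on formula complexity: $F \in \Phi_w \Rightarrow w \Vdash F$ and $F \in \Psi_w \Rightarrow w \nVdash F$. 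The atomic and $\bot$ cases hold by the valuation. For $\Box G \in \Phi_w$ (resp.\ $\Box G \in \Psi_w$) the formula $G$ appears in the antecedent (resp.\ succedent) of every (resp.\ some) premise of the $(\boxtimes)$ at $w$ — here the side conditions that $\Upsilon,\Lambda$ contain only atoms and $\bot$ guarantee $\Box G$ is principal — so by the persistence fact and the induction hypothesis for $G$ every (resp.\ some) $R$-successor forces $G$ true (resp.\ false).

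The $\Box^+$ cases are where the transitive-closure/fixed-point nature bites. For $\Box^+ G \in \Phi_w$ I would argue without circularity by exploiting that $\Box^+ G$, being modal, is never decomposed: it occurs among the $\Box^+\Pi^s$ of every premise of the $(\boxtimes)$ at $w$ and so literally reappears in the antecedent of every $R$-successor. Hence $X = \{v : \Box^+ G \in \Phi_v\}$ is $R$-closed and contains $w$, so any $v$ with $w R^+ v$ has an $R$-predecessor $u \in X$; from $\Box^+ G \in \Phi_u$ the formula $G$ itself sits in the antecedent of every premise of the $(\boxtimes)$ with conclusion $u$, whence by persistence and the induction hypothesis for $G$ every $R$-successor of $u$, in particular $v$, forces $G$. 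Thus $w \Vdash \Box^+ G$, using the induction hypothesis only for the proper subformula $G$.

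The genuinely hard case, and the main obstacle, is the negative one, $\Box^+ C \in \Psi_w$, where a single $R^+$-successor refuting $C$ must be produced; this is exactly where Lemma~\ref{Box+ falsification lemma} does the decisive work. The subtree of $\delta$ rooted at $w$ is itself a regular refutation tree whose root has $\Box^+ C$ in the succedent, so the lemma yields an application of $(\boxtimes)$ inside it with a premise of the shape $\Theta \Rightarrow C$. Its conclusion-world is reached from $w$ by finitely many $R$-steps, and the saturated leaves of the block above $\Theta \Rightarrow C$ are $R$-successors of that conclusion; by succedent-persistence and the induction hypothesis for $C$ each such leaf $v$ satisfies $v \nVdash C$, while $w R^+ v$, so $w \nVdash \Box^+ C$. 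Finally, to refute the original root I propagate falsification downward through its own saturation block: each saturated leaf refutes its sequent by the truth lemma, while $(\to_R),(\to_{L1}),(\to_{L2})$ transmit refutation from their single premise and $(\to_L)$ from whichever premise is refuted, so some leaf-world $w_0$ satisfies $w_0 \Vdash \bigwedge\Gamma$ and $w_0 \nVdash \bigvee\Delta$. Hence $\bigwedge\Gamma \to \bigvee\Delta$ fails at $w_0$, and $\Gamma \Rightarrow \Delta$ is invalid.
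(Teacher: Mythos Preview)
Your proof is correct and follows essentially the same approach as the paper: build the countermodel on the $(\boxtimes)$-conclusions of the refutation tree, define the valuation from antecedents, prove a truth lemma by induction on formula size, and invoke Lemma~\ref{Box+ falsification lemma} for the negative $\Box^+$ case. The only notable organisational difference is that the paper states the truth lemma for \emph{every} node $c$ of $\delta$ (not just saturated ones) via a set $\sat(c)$ of saturated descendants, which absorbs your separate ``persistence through a saturation block'' step into the main induction and makes the implication and modal cases go through uniformly.
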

\begin{proof}
Assume we have  a regular refutation tree $\delta$ with the root marked by $\Gamma \Rightarrow \Delta$. For any node $c$ of $\delta$, let us denote the sequent of the node $c$ by $\Phi_c \Rightarrow \Psi_c$. 

Now we define a Kripke frame, which will be used to invalidate the sequent $\Gamma \Rightarrow \Delta$. We denote the set of nodes of $\delta$ that are conclusions of applications of the rule ($\boxtimes$) by $W$. For $a, b \in W$, we put $a \prec b$ if $b$ is a descendant of $a$ and there is exactly one application of ($\boxtimes$) in between $a$ and $b$. Besides, we denote the transitive closure of $\prec$ by $\prec^+$. We obtain the bimodal frame $(W, \prec, \prec^+)$. For this frame, we define the truth relation by letting
\[a \vDash p \Longleftrightarrow p \in \Phi_a.\]

For a node $c $ of $\delta$ and $a \in W$, we set $a \in \sat (c)$ if and only if $a$ is a descendant of $c$ in the tree $\delta$ and there are no applications of the rule ($\boxtimes$) in between $c$ and $a$. 
We claim that, for any formula $F$ and any node $c $ of $\delta$,  
\begin{gather*}\label{Form1} 
F \in \Phi_c \Longrightarrow \forall a\in \sat(c) \; a \vDash F,\\
F \in \Psi_c \Longrightarrow \forall a\in \sat(c) \; a \nvDash F.
\end{gather*}
The claim is established by induction on $\sz (F)$. 

Suppose $F=\bot$. Since the sequent $\Phi_c \Rightarrow \Psi_c$ is unprovable (by the definition of refutation tree), we have $\bot \nin \Phi_c$. We also see that $a \nvDash \bot$ for any $a \in \sat (c)$. The aforementioned assertion holds.

Suppose $F=p$. If $p \in \Phi_c$, then $p \in \Phi_a$ for any $a \in \sat (c)$. Consequently, $a \vDash p$ from the choice of the truth relation $\vDash$. Now if $p \in \Psi_c$, then $p \in \Psi_a$ for any $a \in \sat (c)$. Since the sequent $\Phi_a \Rightarrow \Psi_a$ is unprovable, we have $p \nin \Phi_a$. It follows that $a \nvDash p$ by the definition of the truth relation $\vDash$.

Suppose $F = A \to B$. If $(A \to B) \in \Phi_c$, then, on the path from $c$ to each $a\in \sat (c)$, we can find a node $a^\prime$ such that $B \in \Phi_{a^\prime}$ or $A \in \Psi_{a^\prime}$. Notice that $a \in \sat (a^\prime)$. From the induction hypothesis, we see that $a \vDash B$ or $a \nvDash A$. Consequently, $a \vDash A \to B$. 

If $(A \to B) \in \Psi_c$, then, on the path from $c$ to each $a\in \sat (c)$, we can find a node $a^\prime$ such that $A \in \Phi_{a^\prime}$ and $B \in \Psi_{a^\prime}$. Notice that $a \in \sat (a^\prime)$. From the induction hypothesis, we see that $a \vDash A$ and $a \nvDash B$. It follows that $a \nvDash A \to B$. 

Suppose $F$ has the form $\Box A$. If $\Box A \in \Phi_c$, then $\Box A \in \Phi_a$ for any $a \in \sat (c)$. In order to show that $a \vDash \Box A$, let us consider any $b\in W$ such that $a \prec b$. Recall that there is the unique application of the rule ($\boxtimes$) in between $a$ and $b$ and $a$ is the conclusion of the application. Moreover, there is a premise $a^\prime$ of the given application such that $b \in \sat(a^\prime)$. 
We see that $A \in \Phi_{a^\prime}$ and $b \vDash A$ by the induction hypothesis for $A$. We obtain that $a \vDash \Box A$.

Now if $\Box A \in \Psi_c$, then $\Box A \in \Psi_a$ for any $a \in \sat (c)$. Recall that $a$ is the conclusion of an application of the rule ($\boxtimes$) in $\delta$. Hence there is a premise $a^\prime$ of the given application such that $A \in \Psi_{a^\prime}$. Since $\sat (a^\prime) \neq \emptyset$, there is a node $b \in \sat (a^\prime)$. By the induction hypothesis for $A$, we have $b \nvDash A$. We see that $a\prec b$, $b \nvDash A$ and $a \nvDash \Box A$.

Suppose $F = \Box^+ A$. Let us check that $a \vDash \Box^+ A$ for $a\in \sat (c)$ if $\Box^+ A \in \Phi_c$. Consider any node $a \in  \sat (c)$ and an arbitrary sequence $a=a_0 \prec a_1 \prec \dotsb \prec a_n \prec a_{n+1}$. From $\Box^+ A \in \Phi_c$, we have $\Box^+ A \in \Phi_{a_i}$ for all $i\in \{0,\dotsc, n+1\}$. We recall that $a_n$ is the conclusion of an application of the rule ($\boxtimes$) in $\delta$. Therefore there is a premise $a^\prime_n$ of the given application such that $a_{n+1} \in \sat (a^\prime_n)$. In addition, we have $A \in \Phi_{a^\prime_n}$. From the induction hypothesis for $A$, we obtain $a_{n+1}\vDash A$. Consequently, $a \vDash \Box^+ A$.

If $\Box^+ A \in \Psi_c$, then $\Box^+ A \in \Psi_a$ for any $a \in \sat (c)$. Applying Lemma \ref{Box+ falsification lemma} for $C=A$, in the subtree of $\delta$ determined by $a$, we can find a node $a^\prime$ such that $A \in \Psi_{a^\prime}$. Also, there is an application of the rule ($\boxtimes$) in between $a$ and $a^\prime$. Since $\sat (a^\prime)\neq \emptyset$, there is a node $a^{\prime\prime} \in \sat (a^\prime)$. By the induction hypothesis for $A$, we have $a^{\prime\prime} \nvDash A$. We also see that $a \prec^+ a^{\prime\prime}$. Therefore $a \nvDash \Box^+ A$.

The claim is established.

Now let $r$ be the root of $\delta$. Since $\sat (r) \neq \emptyset$, there is a node $r^\prime \in \sat (r)$. We see that $\Phi_r=\Gamma$, $\Psi_r=\Delta$, $r^\prime \vDash \bigwedge \Gamma$ and $r^\prime \nvDash \bigvee \Delta$. Thus the sequent $\Gamma \Rightarrow \Delta$ is invalid.  
\end{proof}

\begin{theorem}
Any valid sequent is provable in the sequent calculus $\mathsf{S}$. 
\end{theorem}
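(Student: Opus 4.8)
The plan is to prove the contrapositive: every sequent that is \emph{unprovable} in $\mathsf{S}$ is invalid. In view of the preceding development, this requires only composing two of the lemmas already established.

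First I would take an arbitrary unprovable sequent $\Gamma \Rightarrow \Delta$ and apply Lemma \ref{from unprovability to refutability} to obtain a regular refutation tree with root marked by $\Gamma \Rightarrow \Delta$. Feeding this refutation tree into Lemma \ref{from refutability to invalidity} then yields at once that $\Gamma \Rightarrow \Delta$ is invalid. Thus unprovability entails invalidity, and the contrapositive of this implication is exactly the statement of the theorem: if $\Gamma \Rightarrow \Delta$ is valid, it must be provable in $\mathsf{S}$.

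Because the theorem is an immediate corollary of Lemmas \ref{from unprovability to refutability} and \ref{from refutability to invalidity}, no genuine difficulty remains at this final step. The substantive work was carried out earlier: constructing the regular refutation tree from an unprovable sequent (via the saturation procedure of Lemma \ref{Saturation lemma} together with the finiteness of the subformula-closed set $S$ of candidate sequents), and---more delicately---verifying in Lemma \ref{from refutability to invalidity} that the frame $(W, \prec, \prec^+)$ extracted from the refutation tree actually falsifies the root sequent. In that verification the case $F = \Box^+ A$ is the crux, since it relies on the $\Box^+$-falsification guarantee supplied by Lemma \ref{Box+ falsification lemma}, which ensures that a $\Box^+ A$ occurring in a succedent is genuinely refuted somewhere strictly above along $\prec^+$. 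Here soundness of $\mathsf{S}$ (the easy direction, not needed for the present theorem) would complement the picture, but for completeness it is precisely this contrapositive chain that I would invoke.
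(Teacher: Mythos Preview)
Your proposal is correct and follows essentially the same approach as the paper: both argue by contraposition (equivalently, \emph{reductio ad absurdum}), applying Lemma~\ref{from unprovability to refutability} to produce a regular refutation tree for an unprovable sequent and then Lemma~\ref{from refutability to invalidity} to conclude invalidity. Your additional commentary on where the substantive work lies is accurate but not part of the proof proper.
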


\begin{proof}
Assume we have a valid sequent $\Gamma \Rightarrow \Delta$. We show that the sequent $\Gamma \Rightarrow \Delta$ is provable in the sequent calculus $\mathsf{S}$ by \emph{reductio ad absurdum}. If  $\Gamma \Rightarrow \Delta$ is unprovable, then there exists a regular refutation tree with the root marked by $\Gamma \Rightarrow \Delta$ from Lemma \ref{from unprovability to refutability}. Therefore, the sequent $\Gamma \Rightarrow \Delta$ is invalid by Lemma \ref{from refutability to invalidity}, which is a contradiction. Consequently, the sequent $\Gamma \Rightarrow \Delta$ is provable in $\mathsf{S}$. 
\end{proof}

\begin{corollary}\label{completeness}
If $\mathsf{K}^+\vdash \bigwedge
\Gamma \to \bigvee\Delta$, then the sequent $\Gamma \Rightarrow \Delta$ is provable by a regular $\infty$-proof.
\end{corollary}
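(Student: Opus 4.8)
The plan is to obtain the corollary immediately from the soundness direction of Proposition 1 together with the theorem just proved, the only glue being the observation that the notion of validity used for sequents coincides exactly with validity over the class of $\mathsf{K}^+$-frames.

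First I would unpack the hypothesis $\mathsf{K}^+\vdash \bigwedge\Gamma \to \bigvee\Delta$ via soundness: by the soundness half of Proposition 1, the formula $\bigwedge\Gamma \to \bigvee\Delta$ holds at every point of every $\mathsf{K}^+$-frame. Next I would line this up with the definition of a valid sequent. A $\mathsf{K}^+$-frame is by definition a bimodal frame $(W,R,S)$ in which $S$ is the transitive closure of $R$, that is, precisely a frame of the form $(W,R,R^+)$; and the sequent $\Gamma \Rightarrow \Delta$ was declared valid exactly when $\bigwedge\Gamma \to \bigvee\Delta$ is valid in every such frame. Hence the conclusion of the previous step says verbatim that $\Gamma \Rightarrow \Delta$ is a valid sequent.

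Finally I would invoke the preceding theorem, which states that every valid sequent is provable in $\mathsf{S}$, and recall that provability in $\mathsf{S}$ means precisely the existence of a regular $\infty$-proof whose root is marked by the sequent in question. This directly yields the desired regular $\infty$-proof of $\Gamma \Rightarrow \Delta$.

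There is no substantive obstacle here: the entire mathematical content of the corollary has already been discharged in Lemmas \ref{from unprovability to refutability} and \ref{from refutability to invalidity} and the theorem they support. The single point requiring (trivial) care is to confirm that the two vocabularies of validity at play---validity over $\mathsf{K}^+$-frames on the one hand and sequent validity over frames $(W,R,R^+)$ on the other---denote the same class of structures and the same truth conditions for $\Box$ and $\Box^+$, which is immediate from the definitions since $S$ and $R^+$ name the same transitive-closure relation.
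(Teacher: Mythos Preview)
Your proposal is correct and matches the paper's intended route: the corollary is stated without proof precisely because it follows at once from Proposition~1 (soundness of $\mathsf{K}^+$ over $\mathsf{K}^+$-frames) together with the preceding theorem, via the observation that sequent validity is defined over exactly the class of $\mathsf{K}^+$-frames. There is nothing to add or correct.
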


\section{Cyclic proofs and annotations}
In order to facilitate our prove of the realization theorem, we introduce annotated versions of sequents and inference rules of the sequent calculus $\mathsf{S}$. We also define useful finite representations of regular $\infty$-proofs called cyclic (or circular) proofs.

An \emph{annotated formula} is a formula of $\mathsf{K}^+$ in which any occurrence of a modal connective is labelled with a natural number. These labelled modal connectives are denoted by $\Box_i$ and $\Box^+_i$, where $i\in \mathbb{N}$. A modal formula is called \emph{properly annotated} if distinct occurrences of $\Box$ in it are labelled with distinct natural numbers, and the same holds for the occurrences of $\Box^+$. 

An \emph{annotated sequent} is an expression of the form $\Gamma \Rightarrow_\alpha \Delta$, where all formulas in $\Gamma$ and $\Delta$ are annotated and $\alpha$ is an annotated formula of the form $\Box^+_n C$ or an auxiliary sign $\ast$. In addition, if $\alpha$ is a formula, then the musltiset $\Delta$ must contain $\alpha$. We also require that negative occurrences of modal connectives in $\Gamma \Rightarrow_\alpha \Delta$ (i.e. in $\bigwedge \Gamma \rightarrow \bigvee \Delta$) are labelled with even natural numbers and positive ones are labelled with odd numbers. An annotated sequent $\Gamma \Rightarrow_\alpha \Delta$ is called \emph{properly annotated} if the formula $\bigwedge \Gamma \rightarrow \bigvee \Delta$ is properly annotated. Here is an example of a properly annotated sequent:
\[\Box_1 p \to r, q \to \Box^+_2 (p \to  \Box_6 \bot) \Rightarrow_{\Box^+_1 p} \Box^+_1 p, \bot .\]

Annotated versions of inference rules are defined as
\begin{gather*}
\AXC{}
\RightLabel{ ,}
\UIC{$\Gamma, p \Rightarrow_\alpha p, \Delta $}
\DP  \qquad
\AXC{}
\RightLabel{ ,}
\UIC{$\Gamma, \bot \Rightarrow_\alpha  \Delta $}
\DP 
\end{gather*}
\begin{gather*}
\AXC{$\Gamma , B \Rightarrow_\alpha  \Delta$}
\AXC{$\Gamma \Rightarrow_\alpha  A, \Delta$}
\LeftLabel{$\mathsf{\rightarrow_L}$}
\BIC{$\Gamma , A \rightarrow B \Rightarrow_\alpha  \Delta$}
\DisplayProof \;,\quad
\AXC{$\Gamma, A \Rightarrow_\alpha  B ,\Delta$}
\LeftLabel{$\mathsf{\rightarrow_R}$}
\UIC{$\Gamma \Rightarrow_\alpha  A \rightarrow B ,\Delta$}
\DisplayProof \;,
\end{gather*}
\begin{gather*}
\AXC{$A_1,\dotsc ,A_k, B_1, \dotsc, B_l, \Box^+_{j_1} B_1,\dotsc, \Box^+_{j_l} B_l \Rightarrow_\ast C$}
\LeftLabel{$\mathsf{\Box}_m$}
\UIC{$ \Upsilon, \Box_{i_1} A_1, \dotsc, \Box_{i_k} A_k, \Box^+_{j_1} B_1,\dotsc, \Box^+_{j_l} B_l \Rightarrow_\alpha \Box_m C , \Lambda$}
\DisplayProof \;,\\\\
\AXC{$\Sigma, \Pi, \Box^+_{j_1} B_1,\dotsc, \Box^+_{j_l} B_l \Rightarrow_\ast C$}
\AXC{$\Sigma, \Pi, \Box^+_{j_1} B_1,\dotsc, \Box^+_{j_l} B_l \Rightarrow_{\Box^+_n C} \Box^+_n C$}
\LeftLabel{$\Box^+_n$}
\BIC{$  \Upsilon, \Box_{i_1} A_1, \dotsc, \Box_{i_k} A_k, \Box^+_{j_1} B_1,\dotsc, \Box^+_{j_l} B_l \Rightarrow_\alpha \Box^+_n C , \Lambda$}
\DisplayProof \;,
\end{gather*}
where $\Sigma= \{A_1,\dotsc ,A_k\}$ and $\Pi=\{B_1, \dotsc, B_l\}$.

An \emph{annotated $\infty$-proof} is a (possibly infinite) tree whose nodes are marked by annotated sequents and that is constructed according to annotated versions of inference rules. Moreover, every infinite branch in it must contain a tail such that all sequents in the tail are annotated with the same subscript formula $\Box^+_n C$ and the tail intersects an application of the rule ($\Box^+_n$) on the right premise infinitely many times. An annotated $\infty$-proof is \emph{regular} if it contains only finitely many non-isomorphic subtrees with respect to annotations. Also, we call an annotated $\infty$-proof \emph{properly annotated} if its root is marked by a properly annotated sequent.



Notice that if we erase all annotations in an annotated $\infty$-proof, then the resulting tree is an ordinary $\infty$-proof. Let us prove the converse.

\begin{lemma}\label{AnnLem}
Any $\infty$-proof $\pi$ can be properly annotated. Moreover, the obtained annotated $\infty$-proof can be chosen to be regular if $\pi $ is regular.
\end{lemma}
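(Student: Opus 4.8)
The plan is to build the annotation by a single upward pass through $\pi$, starting from a properly annotated root, exploiting the fact that almost all label information is forced: the only genuine freedom is the choice of principal occurrence at each modal inference. First I would properly annotate the root sequent $\Gamma \Rightarrow \Delta$ by giving pairwise distinct even labels to the negative occurrences of $\Box$ and pairwise distinct odd labels to the positive ones, and likewise for $\Box^+$, and I set the root subscript to $\ast$; this is possible since there are only finitely many modal occurrences. Then I propagate the annotation upward. Because $\mathsf{S}$ has the subformula property, in every rule each formula of a premise is a labelled subformula, or an unchanged copy, of a formula of the conclusion; hence the labels of the premises are completely determined by those of the conclusion and no fresh label is ever introduced.

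Next I check that this propagation yields a legitimate annotated tree. One verifies rule by rule that polarity is respected: the content of an antecedent box stays in the antecedent and the content of a succedent box stays in the succedent, so negative occurrences keep even labels and positive occurrences keep odd labels. The subscripts are moved exactly as the annotated rules dictate, namely inherited through the propositional rules, reset to $\ast$ above $(\Box_m)$ and above the left premise of $(\Box^+_n)$, and set to the annotated principal formula $\Box^+_n C$ above the right premise of $(\Box^+_n)$. Since a propositional rule never deletes the subscript formula from the succedent, the requirement that $\Delta$ contain the subscript whenever the latter is a formula is maintained. Thus the tree obtained is built by the annotated rules and, by construction, has a properly annotated root.

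The single real decision is, at each modal inference, which annotated occurrence to take as the principal formula, since $\pi$ fixes only the \emph{unannotated} principal formula while the succedent may contain several occurrences of it carrying different labels. I fix the deterministic policy: at a $(\Box^+)$ inference whose conclusion has subscript $\Box^+_n C$ and whose $\pi$-principal formula erases to $\Box^+ C$, take $\Box^+_n C$ itself as principal (it lies in the succedent by the subscript condition); in every other case take the occurrence of least label. I expect this policy to be the heart of the argument, because it keeps the success condition alive. Given an infinite branch, I invoke the original success condition to obtain a tail on which every modal inference is a $(\Box^+)$ taken on its right premise with one fixed unannotated principal formula $\Box^+ A$, with no $(\Box)$ inference and no passage through a left premise. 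At the first such $(\Box^+)$ the right premise receives some subscript $\Box^+_n A$; between consecutive $(\Box^+)$ inferences of the tail only propositional rules occur, which inherit the subscript and preserve $\Box^+_n A$ in the succedent, so the policy forces $\Box^+_n A$ to be principal at every later $(\Box^+)$. Hence from that point the branch carries the constant subscript $\Box^+_n A$ and meets the right premise of $(\Box^+_n)$ infinitely often, which is precisely the annotated success condition. The main obstacle is exactly this stabilisation of the label: without the active tracking enforced by the subscript, distinct occurrences of $\Box^+ A$ with different labels could be chosen along the tail and the subscript would never settle.

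Finally, for regularity I would observe that every label used anywhere comes from the finite set fixed at the root, and that the subscript ranges over the finite set of labelled subformulas of the root together with $\ast$; hence a fixed underlying sequent admits only finitely many annotated variants. Since the upward propagation, including the principal-selection policy, is a local deterministic function, the annotated subtree hanging from a node is determined by the pair consisting of the underlying subtree and the annotated sequent at that node. As $\pi$ is regular it has finitely many subtrees up to the marking of sequents, and there are finitely many annotated sequents; so there are finitely many annotated subtrees up to isomorphism, and the resulting annotated $\infty$-proof is regular.
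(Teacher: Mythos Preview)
Your proposal is correct and follows essentially the same approach as the paper: fix a deterministic upward propagation of annotations from a properly annotated root, with the proviso that at a $(\Box^+)$ inference the principal occurrence is chosen so as to preserve the current subscript whenever possible; then argue that along any infinite branch the subscript stabilises after the first $(\Box^+)$ in the tail, and obtain regularity from the finiteness of the label set together with the determinism of the propagation. You are somewhat more explicit than the paper about the principal-selection policy and about why determinism forces regularity, but the underlying argument is the same.
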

\begin{proof}

Note that, for any application of an inference rule of $\mathsf{S}$ and any annotation of its conclusion, one can annotate its premises and obtain an application of the annotated version of the rule. However, the choice of annotations for the premises is not unique. Let us fix, for any application of an inference rule of $\mathsf{S}$, some way of propagating annotations from the conclusion of the rule to its premises. We also require that this way of propagation, when moving from the conclusion of the rule ($\Box^+$) to its right premise, preserves, whenever possible, the subscript formula $\Box^+_n C$. 

Now assume we have an $\infty$-proof $\pi$ and an arbitrary proper annotation of its root. Starting from the root, we annotate $\pi$ according to the chosen way of propagating annotations and denote the resulting tree of annotated sequents by $\pi^\prime$. 

We claim that the given tree $\pi^\prime$ is an annotated $\infty$-proof. It is sufficient to check that $\pi^\prime$ satisfies the required condition on infinite branches. Suppose there is an infinite branch in $\pi^\prime$. Then, by the definition of $\infty$-proof, this branch contains a tail that does not intersect applications of the rule ($ \Box$) and applications of the rule ($\Box^+$) on the left premise. Moreover, all applications of the rule ($\Box^+$) in the tail have the same principal formula $\Box^+ A$ disregarding annotations. Note also that the tail intersects the rule ($\Box^+$) infinitely many times. Consequently, after the first application of the rule ($\Box^+$), all left-hand sides of sequents in the tail contain the formula $\Box^+ A$  disregarding annotations. According to the chosen way of propagating annotations, from now on all annotated sequents in the tail have the same subscript formula $\Box^+_n C$ and all applications of the rule ($\Box^+$) have the same principal formula $\Box^+_n C$, where $\Box^+_n C$ is an annotated version of the formula $\Box^+ A$. Therefore, every infinite branch of $\pi^\prime$ satisfies the required condition, and $\pi^\prime$ is an annotated $\infty$-proof.


We now assume that the $\infty$-proof $\pi$ is regular, and show that $\pi^\prime$ is also regular by \emph{reductio ad absurdum}. Suppose there is an infinite sequence of pairwise non-isomorphic subtrees of $\pi^\prime$. Since $\pi^\prime$ is obtained from a regular $\infty$-proof, there are only finitely many non-isomorphic subtrees disregarding annotations in $\pi^\prime$. Therefore, there is a subsequence $(\mu_i)_{i\in \mathbb{N}}$ of the given sequence, where all members are isomorphic disregarding annotations. We see that the roots of $(\mu_i)_{i\in \mathbb{N}}$ are marked by non-identical annotated sequents obtained from a single unannotated sequent $\Gamma \Rightarrow \Delta$. However, any annotated formula occurring in $\pi^\prime$ is a subformula of the annotated sequence of the root. Consequently, there can be only finitely many non-identical annotated sequents obtained from $\Gamma \Rightarrow \Delta$ in $\pi^\prime$, which is a contradiction. We conclude that the annotated $\infty$-proof $\pi^\prime$ is regular.
\end{proof}

A \emph{cyclic annotated proof} is a pair $(\kappa, d)$, where $\kappa$ is a finite tree of annotated sequents constructed in accordance with annotated versions of inference rules of $\mathsf{S}$ and $d$ is a function with the following properties: the function $d$ is defined on the set of all leaves of $\kappa$ that are not marked by sequents of the form $\Gamma,p\Rightarrow_\alpha p,\Delta$ and $\Gamma, \bot \Rightarrow_\alpha \Delta $; the image $d(a)$ of a leaf $a$ lies on the path
from the root of $\kappa$ to the leaf $a$ and is not equal to $a$; $d(a)$ and $a$ are marked by the same sequents; all sequents on the path from $d(a)$ to $a$ have the same subscript formula $\Box^+_n C$; this path intersects an application of the rule ($\Box^+_n$) on the right premise. If the function $d$ is defined at a leaf $a$, then we say that the nodes $a$ and $d(a)$ are connected by a back-link. 

Obviously, every cyclic annotated proof can be unravelled into a regular one. We prove the converse.
\begin{lemma}\label{CyclLem}
Any regular annotated $\infty$-proof can be obtained by unravelling of a cyclic annotated proof.
\end{lemma}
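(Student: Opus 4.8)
The plan is to fold the infinite tree $\pi$ into a finite tree $\kappa$ by cutting each branch at a point where an annotated subtree repeats, and to record these repetitions through the back-link function $d$. First I would traverse $\pi$ downward from the root and, along each branch, search for the first node $a$ that admits an ancestor $b$ on the path from the root to $a$ such that (i) the subtrees of $\pi$ rooted at $a$ and at $b$ are isomorphic as annotated trees, (ii) all sequents on the path from $b$ to $a$ carry the same subscript formula $\Box^+_n C$, and (iii) this path passes through the right premise of an application of the rule ($\Box^+_n$). Whenever such a node $a$ is reached, I would make $a$ a leaf of $\kappa$ and set $d(a)=b$; branches ending in an axiom of $\mathsf{S}$ stay as they are and contribute leaves on which $d$ is undefined.

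The first key step is to verify that this cutting procedure terminates, i.e.\ that $\kappa$ is finite. Since $\kappa$ is finitely branching, by K\"onig's lemma it suffices to show that every branch is cut after finitely many steps. An uncut branch would be an infinite branch of $\pi$, hence, by the condition imposed on infinite branches of annotated $\infty$-proofs, it would possess a tail all of whose sequents share a single subscript formula $\Box^+_n C$ and which meets the right premise of ($\Box^+_n$) infinitely often. Regularity of $\pi$ provides only finitely many isomorphism classes of annotated subtrees, so by the pigeonhole principle two nodes $b\prec a$ lying in this tail, with a ($\Box^+_n$)-application traversed along its right premise strictly between them, have isomorphic subtrees; such a pair fulfils (i)--(iii), contradicting the assumption that the branch is never cut. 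Hence every branch terminates and $\kappa$ is finite.

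Next I would check that $(\kappa,d)$ is a cyclic annotated proof in the sense defined above. Condition (i) ensures that $a$ and $d(a)$ are marked by the same annotated sequent, while (ii) and (iii) are exactly the remaining requirements on back-links; that $d(a)$ lies strictly above $a$ is built into the construction, and every non-axiom leaf carries a back-link because we cut only at valid repetition points. Finally, I would show that the unravelling of $(\kappa,d)$ reproduces $\pi$. The non-leaf part of $\kappa$ is literally an initial fragment of $\pi$, and each cut replaces the leaf $a$ by the fragment rooted at $d(a)$; since by (i) the subtree of $\pi$ below $a$ is isomorphic to the subtree of $\pi$ below $d(a)$, a straightforward co-recursive (bisimulation) argument identifies the unravelled tree with $\pi$ node by node.

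The main obstacle is the termination step: one must arrange each cut so that the resulting back-link already satisfies conditions (ii) and (iii), rather than merely closing a loop on a repeated sequent. This is precisely where the progress condition on infinite branches of annotated $\infty$-proofs is indispensable, since it guarantees, via the pigeonhole argument over the finitely many annotated subtrees, that a repetition compatible with a genuine ($\Box^+_n$)-back-link is always available before any branch can run off to infinity.
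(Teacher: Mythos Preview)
Your argument is correct and uses the same underlying ingredients as the paper: regularity gives finitely many subtree isomorphism types, and the progress condition on infinite branches guarantees that a suitable back-link can always be found. The execution differs in one point worth noting. You search along each branch for the \emph{first} repetition that already satisfies (ii) and (iii), and then use K\"onig's lemma together with the tail condition to show that such a repetition appears before any branch runs to infinity. The paper instead cuts every branch uniformly at depth $m+1$ (where $m$ is the number of isomorphism classes of subtrees), takes \emph{any} pair $b,c$ on that branch with $\pi_b\cong\pi_c$, and observes that (ii) and (iii) hold automatically: if either failed, iterating the segment from $b$ to $c$ through the isomorphism $\pi_b\cong\pi_c$ would produce an infinite branch of $\pi$ violating the progress condition. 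The paper's version thus avoids K\"onig's lemma and gives an explicit depth bound for $\kappa$, while your version is more explicit about why the back-link conditions are met and about the co-recursive identification of the unravelling with $\pi$ (which the paper leaves implicit).
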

\begin{proof}
Assume we have a regular annotated $\infty$-proof $\pi$. Notice that each node $a$ of this tree determines the subtree $\pi_a$ with the root $a$. Let $m$ denote the number of non-isomorphic subtrees
of $\pi$. Consider any branch $a_0, a_1, \dotsc , a_m$ in $\pi$ that starts at the root of $\pi$ and has length $m + 1$. This branch defines the sequence of subtrees $\pi_{a_0},\pi_{a_1}, \dotsc,\pi_{a_m}$. Since $\pi$ contains precisely $m$ non-isomorphic subtrees, the branch contains a pair of different nodes $b$ and $c$ determining isomorphic subtrees $\pi_b$ and $\pi_c$. Without loss of generality, assume that $c$ is farther from the root than $b$. Note that all sequents on the path form $b$ to $c$ have the same subscript formula of the form $\Box^+_n C$ and this path intersects an application of the rule ($\Box^+_n$) on the right premise, because otherwise there is an infinite branch in $\pi$ that violates the corresponding condition on infinite branches of annotated $\infty$-proofs. We cut the branch under consideration at the node $c$ and connect $c$, which has become a leaf, with $b$ by a back-link. By applying a similar operation to each of the remaining branches of length $m + 1$, we ravel the regular annotated $\infty$-proof $\pi$ into the desired cyclic annotated proof.
\end{proof}

\section{Realization theorem}
In this section, we establish the realization theorem connecting the modal logic $\mathsf{K}^+$ and the justification logic $\mathsf{J}^+$. Note that all realizations constructed in the proof
will be normal.

Let us define the forgetful translation from the language of $\mathsf{J}^+$ into the language of $\mathsf{K}^+$. Given a justification formula $A$, its forgetful translation $A^\circ$ is defined inductively by
\begin{gather*}
p^\circ :=p, \qquad \bot^\circ :=\bot, \qquad (A\to B)^\circ:= (A^\circ \to B^\circ),\\ ([ w] A)^\circ:= \Box A^\circ,\qquad ([s]_\mathsf{tc}\, A )^\circ:= \Box^+ A^\circ.  
\end{gather*}
Obviously, the forgetful translation of any theorem of $\mathsf{J}^+$ is a theorem of $\mathsf{K}^+$.
The converse statement, which we give in a slightly stronger form, is called a realization theorem. 

A justification formula $B$ is a \emph{realization of a modal formula $A$} if the formula $B$ is obtained from $A$ by replacing every occurrence of $\Box$ ($\Box^+$) in $A$ with an arbitrary justification term of the first (second) sort. The realization $B$ is called \emph{normal} if distinct negative occurrences of $\Box$ ($\Box^+$) in $A$ are replaced with distinct justification variables of the first (second) sort.

\begin{theorem}[normal realization]\label{RelThm}
For any theorem $A$ of the logic $\mathsf{K}^+$, there exists its normal realization $B$ such that $B$ has an injective proof in $\mathsf{J}^+$.
\end{theorem}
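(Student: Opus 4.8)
The plan is to prove the realization theorem by the standard ``realization via cut-free proof search'' method, now adapted to the non-well-founded setting provided by the calculus $\mathsf{S}$ and its cyclic annotated reformulation. Given a theorem $A$ of $\mathsf{K}^+$, Corollary~\ref{completeness} yields a regular $\infty$-proof of the sequent ${\Rightarrow} A$, Lemma~\ref{AnnLem} upgrades it to a \emph{regular properly annotated} $\infty$-proof, and Lemma~\ref{CyclLem} folds this into a finite \emph{cyclic annotated proof} $(\kappa,d)$. The point of passing to the cyclic object is that the whole construction becomes a finite induction on the tree $\kappa$: I will assign to each annotated sequent occurring in $\kappa$ a realization, i.e. replace every annotated modal connective $\Box_i$ (resp. $\Box^+_j$) by a justification term of the first (resp. second) sort, in such a way that the realized sequent is provable in $\mathsf{J}^+$ by an injective proof. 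The proper annotation guarantees that each modal connective carries a distinct index, so the assignment of terms can be organized as a function of the indices; using fresh variables $x_i$ for even (negative) $\Box_i$ and $y_j$ for even $\Box^+_j$ enforces normality automatically.

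The core of the argument is the treatment of the two modal rules, and everything else (the Boolean rules $\mathsf{\to_L}$, $\mathsf{\to_R}$ and the axiom leaves) is handled by propositional reasoning together with the lifting machinery. For a $\mathsf{\to_L}$ or $\mathsf{\to_R}$ step I would merely carry the realization downward and invoke Boolean closure of $\mathsf{J}^+$. For an application of ($\Box_m$), once the premise has been realized I use Lemma~\ref{Lifting lemma} to manufacture a first-sort term $h$ realizing the positive $\Box_m$ from the terms realizing the antecedent; the negative $\Box_{i}$'s and $\Box^+_{j}$'s become the variables $x_{i}$ and the already-chosen second-sort terms, exactly matching the hypotheses $[z_1]A_1\wedge\dots\wedge[s_m]_\mathsf{tc} B_m$ of the lifting lemma. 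The genuinely new ingredient is the positive $\Box^+_n$, whose term must be a single fixed second-sort term valid simultaneously at every application of ($\Box^+_n$) along the cycle governed by $d$; this is where the fixed-point axiom (ix) enters, repackaged as Lemma~\ref{induction rule lemma}.

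The main obstacle — and the reason the non-well-founded apparatus was set up in the first place — is realizing the eigenformula $\Box^+_n C$ of a cyclic block consistently. I expect to proceed as follows. The back-links connect each leaf $a$ to an ancestor $d(a)$ carrying the same sequent and the same subscript formula $\Box^+_n C$, with the connecting path passing through the right premise of a ($\Box^+_n$) application. For a fixed cycle index $n$ I first realize the cyclic block ``relative to'' a provisional variable $w$ standing in for the second-sort term of $\Box^+_n$, obtaining by the propositional/lifting steps a $\mathsf{J}^+$-provable implication of the shape $B \to [w](C^r \wedge B)$, where $B$ collects the realized antecedent and $C^r$ is the realization of $C$. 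Lemma~\ref{induction rule lemma} then converts this into $B \to [t(w)]_\mathsf{tc}\, C^r$ with a genuine second-sort term $t(w)$ depending only on $w$; substituting this $t$ back for the provisional variable via the substitution Lemma~\ref{Substitution lemma} ``ties the knot'' and discharges the back-link. One must verify that this substitution does not disturb the realizations already fixed for inner modalities — which is ensured because the proper annotation keeps all indices distinct and the term $t(w)$ depends only on the single variable $w$ — and that injectivity of the constant specification is preserved throughout, which follows from the injectivity clauses carried explicitly in Lemmas~\ref{Substitution lemma}, \ref{internalization}, \ref{Lifting lemma} and \ref{induction rule lemma}. Assembling these pieces from the leaves of $\kappa$ down to the root, and noting that the root sequent is ${\Rightarrow} A$ so that its realization is the desired normal realization $B$ of $A$, completes the proof.
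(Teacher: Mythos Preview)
Your overall strategy matches the paper's: pass to a cyclic annotated proof via Corollary~\ref{completeness}, Lemma~\ref{AnnLem} and Lemma~\ref{CyclLem}, then argue by induction on the finite tree, using Lemma~\ref{Lifting lemma} at $(\Box_m)$ and Lemma~\ref{induction rule lemma} at cyclic $(\Box^+_n)$-blocks. However, two concrete pieces of machinery are missing, and without them the induction does not go through.

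First, the sentence ``for a $\mathsf{\to_L}$ step merely carry the realization downward and invoke Boolean closure'' hides a real problem. The two premises of $(\mathsf{\to_L})$ may both contain the same positive annotated formula $\Box_m D$ in their succedents, and your inductive construction will in general produce \emph{different} realizing terms for it in the two subtrees; Boolean reasoning alone cannot merge them. The paper solves this by \emph{preparing} the cyclic proof: every application of $(\Box_m)$ receives a further index $i$, and the translation $(-)^g$ realizes $\Box_m D$ uniformly as $[x_{m,0}+\dotsb+x_{m,g(m-1)-1}]\,D^g$, a sum over provisional variables, one summand per application. Each subtree only fills in the summands $x_{m,i}$ for the applications it actually contains; at $(\mathsf{\to_L})$ the two partial substitutions then have disjoint domains and commute, and Axiom~(v) weakens each local term into the global sum. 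The same device with Axiom~(x) is used for positive $\Box^+_n$. Your proposal never introduces sums or any equivalent merging mechanism.

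Second, your treatment of the cyclic block is too loose. You write $B\to[w](C^r\wedge B)$ with ``$B$ the realized antecedent'', but no single antecedent is invariant along the cycle: between $d(a)$ and $a$ the sequent changes through $(\mathsf{\to_L})$, $(\mathsf{\to_R})$ and the right premise of $(\Box^+_n)$. The paper instead takes $H=\bigvee_{b\in R} G_b$ with $G_b=\bigwedge\Gamma_b\wedge\neg\bigvee\Delta_b$ ranging over \emph{all} nodes $b$ in the block, proves $\sigma(G_b^g)\to[v_b](\sigma(D^g)\wedge\sigma(H^g))$ for each $b$ by a subinduction on a suitable rank (again using $+$ at $(\mathsf{\to_L})$), and only then applies Lemma~\ref{induction rule lemma} to $\sigma(H^g)\to[v](\sigma(D^g)\wedge\sigma(H^g))$; the resulting second-sort term is placed into the appropriate summand of the realization of $\Box^+_n C$. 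Your sketch has the right shape but supplies neither the disjunctive invariant $H$ nor the merging step, and without them the knot cannot actually be tied.
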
 

We call a cyclic annotated proof \emph{prepared} whenever, in the given proof, every occurrence of a modal rule is labelled with an additional natural number so that different occurrences of ($\Box_m$) are labelled with different natural numbers. Also, two different occurrences of ($\Box^+_n$) are labelled with the same natural number if and only if all sequents on the shortest path connecting the right premises of these occurrences have the same subscript formula $\Box^+_n C$. We denote occurrences of ($\Box_m$) and ($\Box^+_m$) labelled with a natural number $i$ by ($\Box_{m,i}$) and ($\Box^+_{m,i}$). A function $g \colon \mathbb{N}  \to \mathbb{N}$ is called a \emph{bounding function for a prepared proof $\pi$} if, for every application of ($\Box_{m,i}$) in $\pi$, we have $i<  g(m-1)$. In addition, for every application of ($\Box^+_{n,j}$), we require that $j<  g(n)$. 

Now we extend the sets of justification variables $\mathit{JV}_1 =\{x_0, x_1, \dotsc \}$ and $\mathit{JV}_2 =\{y_0, y_1, \dotsc \}$ with provisional variables of the form $x_{m,i}$ and $y_{n,j}$.
A substitution 
\[\theta = [w_1 / x_{m_1,i_1},\dotsc , w_k/x_{m_k,i_k}, s_1 / y_{n_1,j_1},\dotsc , s_l/y_{n_l,j_l} ]\]
is called \emph{finalizing} if the terms $w_1,\dotsc, w_k$ and $s_1, \dotsc, s_l$ do not contain provisional variables. In this case, we denote the set 
\[ \{x_{m_1,i_1}, \dotsc ,x_{m_k,i_k},  y_{n_1,j_1},\dotsc , y_{n_l,j_l}\}\]
by $\mathit{Dom}(\theta)$. 
A finalizing substitution $\theta$ is called \emph{adequate for a prepared cyclic annotated proof $\pi$} if $\mathit{Dom}(\theta)= \{x_{m_1,i_1}, \dotsc ,x_{m_k,i_k},  y_{n_1,j_1},\dotsc , y_{n_l,j_l}\}$ and the finite sequences $(\Box_{m_1,i_1}), \dotsc, (\Box_{m_k,i_k})$ and $(\Box^+_{n_1,j_1}), \dotsc, (\Box^+_{n_l,j_l})$ contain precisely all annotated modal rules of $\pi$.

For an arbitrary function $g \colon  \mathbb{N}  \to \mathbb{N}$, we define the following translation of annotated modal formulas to justification ones: $p^g :=p$, $\bot^g :=\bot$, $(A\to B)^g:= (A^g \to B^g)$, $(\Box_{2m} A)^g\coloneq [x_{2m}] A^g$, $(\Box^+_{2m} A)^g\coloneq [y_{2m}] A^g$,
\begin{gather*}
(\Box_{2m+1} A)^g:= 
\begin{cases}
[x_{2m+1}] A^g, & \text{if $g(2m)=0$},\\
[x_{2m+1,0}+\dotsb + x_{2m+1,g(2m)-1}] A^g, & \text{if $g(2m)\neq 0$},
\end{cases}
\end{gather*}
\begin{gather*}
(\Box^+_{2n+1} A)^g:= 
\begin{cases}
[y_{2n+1}]_\mathsf{tc}\, A^g, & \text{if $g(2n+1)=0$},\\
[y_{2n+1,0}+\dotsb + y_{2n+1,g(2n+1)-1}]_\mathsf{tc}\, A^g, & \text{if $g(2n+1)\neq0$}.
\end{cases}
\end{gather*}


For a prepared proof $\pi = (\kappa, d)$, we denote the root of $\kappa$ by $r(\pi)$. Also, for a node $c$ of $\kappa$, by $F_c$, we denote the formula $\bigwedge \Phi_c \to \bigvee \Psi_c$, where $\Phi_c \Rightarrow_\alpha \Psi_c$ is the sequent of the node $c$. 
\begin{lemma}\label{MainLem}
Suppose $\pi$ is a prepared cyclic annotated proof of $\Gamma \Rightarrow_\alpha \Delta$ and $g$ is a bounding function for $\pi$. Then there exists a finalizing substitution $\theta$  adequate for $\pi$ such that the formula $\theta (F^g_{r(\pi)} )$ has an injective proof in $\mathsf{J}^+$.
\end{lemma}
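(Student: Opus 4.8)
The plan is to prove Lemma \ref{MainLem} by induction on the structure of the finite tree $\kappa$ underlying the prepared cyclic proof $\pi$, processing the nodes of $\kappa$ from the leaves down to the root $r(\pi)$ while simultaneously assembling the finalizing substitution $\theta$. The guiding idea is that the translation $(\cdot)^g$ fixes, for every modal rule occurrence in $\pi$, a concrete justification term skeleton (a sum of provisional variables indexed by the label $i<g(m-1)$ or $j<g(n)$), and the substitution $\theta$ will later replace those provisional variables with the actual terms built during the induction. For each node $c$ I aim to produce a justification term so that $\theta(F^g_c)$ becomes provable in $\mathsf{J}^+$ with an injective proof; since $\theta$ must be adequate, each annotated modal rule of $\pi$ contributes exactly one provisional variable to $\mathit{Dom}(\theta)$.

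First I would handle the axiomatic leaves $\Gamma, p \Rightarrow_\alpha p, \Delta$ and $\Gamma, \bot \Rightarrow_\alpha \Delta$, where $\theta(F^g_c)$ is a propositional tautology whose $\mathsf{J}^+$-provability is immediate. The propositional rules $(\mathsf{\to_L})$ and $(\mathsf{\to_R})$ are routine: the translated conclusion follows from the translated premises by propositional reasoning inside $\mathsf{J}^+$, so I would simply invoke the lifting lemma (Lemma \ref{Lifting lemma}) to carry the justification terms upward through the Boolean structure. The genuinely modal content lives in $(\Box_m)$ and $(\Box^+_n)$. For $(\Box_m)$, the translated premise yields a justification for $C^g$ under the premise context, and I would apply Lemma \ref{Lifting lemma} to internalize this into a term $h(\dotsc)$ justifying $C^g$; this term is then the one substituted for the provisional variable $x_{m,i}$ appearing in the sum inside $(\Box_m C)^g$, so that the corresponding disjunct of the sum (via Axiom (v)) delivers $[\cdots]C^g$, i.e.\ the conclusion's translation.

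The crux of the whole argument, and the step I expect to be the main obstacle, is the back-links together with the rule $(\Box^+_n)$. A leaf $a$ with $d(a)$ on the path above it certifies that $\theta(F^g_a)$ and $\theta(F^g_{d(a)})$ are the same formula, and the defining property of a prepared proof guarantees that the path from $d(a)$ to $a$ has a constant subscript formula $\Box^+_n C$ and passes through the right premise of a $(\Box^+_n)$ rule. This is exactly the situation where the fixed-point induction axiom (ix) must be discharged: I would collect the term $w$ built along the cycle so that $\mathsf{J}^+ \vdash B \to [w](A \wedge B)$ holds for the appropriate $B$ (the translated antecedent at $d(a)$) and $A$ (the translated body), and then invoke Lemma \ref{induction rule lemma} to obtain a term $t(w)$ with $\mathsf{J}^+ \vdash B \to [t(w)]_\mathsf{tc}\, A$. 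The delicate point is coherence: because several back-links may target the same $(\Box^+_{n,j})$ occurrence, the sum $y_{n,j,0}+\dotsb+y_{n,j,g(n)-1}$ must accommodate all of them, so the terms extracted from distinct cycles sharing a label must be substituted into distinct summands, and the bounding function $g$ is precisely what guarantees enough summands are available. I would verify that the resulting $\theta$ is finalizing (no provisional variables remain in the substituted terms, which holds because each $t(w)$ is built from the actual terms of strictly earlier stages) and adequate (every modal rule occurrence contributes its variable exactly once), and finally confirm injectivity of the assembled proof by appealing to the injectivity clauses of Lemmas \ref{Lifting lemma} and \ref{induction rule lemma}, which propagate injectivity from the constituent proofs to the combined one.
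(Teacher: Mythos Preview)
Your outline is broadly right through the propositional and $(\Box_m)$ cases, but the back-link case contains a genuine gap. You propose to establish $\mathsf{J}^+ \vdash B \to [w](A \wedge B)$ with $B$ equal to ``the translated antecedent at $d(a)$''. This does not work: the only datum preserved along the path from $d(a)$ to $a$ is the subscript formula $\Box^+_n C$, not the sequent itself. That path typically passes through $(\mathsf{\to_L})$, $(\mathsf{\to_R})$, and the right premise of $(\Box^+_n)$, each of which alters the antecedent and succedent. So there is no single $B$ of the kind you describe that serves as a loop invariant, and without one Lemma~\ref{induction rule lemma} cannot be invoked.

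The paper repairs this by manufacturing an invariant. It collects the set $R$ of all nodes whose path from the root carries the fixed subscript $\Box^+_n D$, sets $G_b \coloneq \bigwedge \Gamma_b \wedge \neg \bigvee \Delta_b$ for the sequent $\Gamma_b \Rightarrow_{\Box^+_n D} \Delta_b, \Box^+_n D$ at $b$, and takes $H \coloneq \bigvee\{G_b \mid b \in R\}$. Now $H$ is trivially implied by every sequent in $R$, and the target becomes $\sigma(H^g) \to [v](\sigma(D^g)\wedge \sigma(H^g))$. Proving this requires a separate subinduction on a rank function over $R$: at rank~$0$ one hits either an axiom leaf, a modal rule with principal formula $\neq \Box^+_n D$ (handled by the outer induction hypothesis on a strictly smaller proof), or an application of $(\Box^+_{n,j})$ whose \emph{left} premise supplies, via Lemma~\ref{Lifting lemma}, the term $o_a$ witnessing the step $G_a^g \to [o_a](D^g \wedge H^g)$. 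The back-link leaves are absorbed because $G_a = G_{d(a)}$. Only after summing the $v_b$ over all of $R$ does one obtain the premise of Lemma~\ref{induction rule lemma}. This disjunctive invariant is the missing idea in your sketch; note also that the outer induction is on the number of nodes (so that the rank-$0$ subcases recurse on genuinely smaller proofs), not a straight leaves-to-root pass, and that the sum in $(\Box^+_{2n+1} A)^g$ is $y_{2n+1,0}+\dotsb+y_{2n+1,g(2n+1)-1}$, with one summand per label $j$, not the triple-indexed form you wrote.
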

\begin{proof}
The lemma is proved by induction on the number of nodes in $\pi=(\kappa,d)$. Note that the function $g$ will be a bounding function for all prepared cyclic annotated proofs considered below.

Case 1. If $\kappa$ consists of a single node, then $\Gamma \Rightarrow_\alpha \Delta$ has the form $\Gamma^\prime, p \Rightarrow_\alpha p, \Delta^\prime$ or  $\Gamma^\prime, \bot \Rightarrow_\alpha \Delta$. Trivially, $\mathsf{J}^+_0\vdash F^g_{r(\pi)} $. Consequently, the formula $ F^g_{r(\pi)}$ has an injective proof in $\mathsf{J}^+$. We define $\theta$ as the identity substitution.

Case 2. Suppose $\Delta =  A \to B, \Delta^\prime$ and $\pi$ has the form 
\[
\AXC{$\pi^\prime$}
\noLine
\UIC{$\vdots$}
\noLine
\UIC{$\Gamma, A \Rightarrow_\alpha  B ,\Delta^\prime$}
\LeftLabel{$\mathsf{\rightarrow_R}$}
\RightLabel{ }
\UIC{$\Gamma \Rightarrow_\alpha  A \rightarrow B ,\Delta^\prime$}
\DisplayProof
\]
for a prepared cyclic annotated proof $\pi^\prime$. By the induction hypothesis, there is a finalizing substitution $\theta^\prime$ adequate for $\pi^\prime$ such that the formula $\theta^\prime (F^g_{r(\pi^\prime)} )$ has an injective proof in $\mathsf{J}^+$. We also have $\mathsf{J}^+_0\vdash\theta^\prime (F^g_{r(\pi^\prime)}) \to \theta^\prime (F^g_{r(\pi)})$. 
Consequently, the formula $\theta^\prime (F^g_{r(\pi)})$ has an injective proof in $\mathsf{J}^+$. We see that $\theta^\prime$ is adequate for $\pi$, and we set $\theta \coloneq \theta^\prime$.

Case 3. Suppose $\Gamma = \Gamma^\prime , A \to B$ and $\pi$ has the form 
\[
\AXC{$\pi^\prime$}
\noLine
\UIC{$\vdots$}
\noLine
\UIC{$\Gamma^\prime , B \Rightarrow_\alpha  \Delta$}
\AXC{$\pi^{\prime\prime}$}
\noLine
\UIC{$\vdots$}
\noLine
\UIC{$\Gamma^\prime \Rightarrow_\alpha  A, \Delta$}
\LeftLabel{$\mathsf{\rightarrow_L}$}
\RightLabel{ ,}
\BIC{$\Gamma^\prime , A \rightarrow B \Rightarrow_\alpha  \Delta$}
\DisplayProof 
\]
where $\pi^\prime$ and $\pi^{\prime\prime}$ are prepared cyclic annotated proofs. Applying the induction hypothesis for $\pi^\prime$ and $\pi^{\prime\prime}$, we find a finalizing substitution $\theta^\prime$ adequate for $\pi^\prime$ and a finalizing substitution $\theta^{\prime\prime}$ adequate for $\pi^{\prime\prime}$ such that $\theta^\prime (F^g_{r(\pi^\prime)} )$ and $\theta^{\prime\prime} (F^g_{r(\pi^{\prime\prime})} )$ are provable in $\mathsf{J}^+$ by injective proofs. Notice that  $\mathsf{J}^+_{\mathit{cs}^\prime}\vdash \theta^\prime (F^g_{r(\pi^\prime)} )$ and $\mathsf{J}^+_{\mathit{cs}^{\prime\prime}}\vdash \theta^{\prime\prime} (F^g_{r(\pi^{\prime\prime})} )$ for some finite injective constant specifications $\mathit{cs}^\prime$ and $\mathit{cs}^{\prime\prime}$. We assume that the sets $\mathit{Con} (\mathit{cs}^\prime)$ and $\mathit{Con} (\mathit{cs}^{\prime\prime})$ are disjoint. Otherwise, we can make them disjoint by renaming the constants from $\mathit{cs}^\prime$ and modifying appropriately the substitution $\theta^\prime$. Since $\mathit{Con} (\mathit{cs}^\prime) \cap \mathit{Con} (\mathit{cs}^{\prime\prime})=\emptyset$, the set $\mathit{cs}^\prime\cup\mathit{cs}^{\prime\prime}$
is a finite injective constant specification. Moreover, $\mathsf{J}^+_{\mathit{cs}^\prime \cup\mathit{cs}^{\prime\prime}}\vdash \theta^\prime (F^g_{r(\pi^\prime)} ) \wedge \theta^{\prime\prime} (F^g_{r(\pi^{\prime\prime})} )$. We see that $ \theta^\prime (F^g_{r(\pi^\prime)} ) \wedge \theta^{\prime\prime} (F^g_{r(\pi^{\prime\prime})} )$ has an injective proof in $\mathsf{J}^+$.

Notice that $\theta^\prime \circ    \theta^{\prime\prime}=\theta^{\prime\prime}\circ \theta^\prime$ since $\mathit{Dom}(\theta^\prime)\cap \mathit{Dom}(\theta^{\prime\prime})=\emptyset$. We set $\theta\coloneq\theta^\prime\circ\theta^{\prime\prime}$. Applying the substitution $\theta$ to $\theta^\prime (F^g_{r(\pi^\prime)} ) \wedge\theta^{\prime\prime} (F^g_{r(\pi^{\prime\prime})} )$, we obtain $\theta (F^g_{r(\pi^\prime)} )\wedge\theta (F^g_{r(\pi^{\prime\prime})} )$. This formula has an injective proof in $\mathsf{J}^+$ by Lemma \ref{Substitution lemma}.
Since $\mathsf{J}^+_0\vdash \theta (F^g_{r(\pi^\prime)}) \wedge \theta (F^g_{r(\pi^{\prime\prime})} ) \to\theta (F^g_{r(\pi)} )$, the formula $\theta (F^g_{r(\pi)} )$ has an injective proof in $\mathsf{J}^+$. Besides, the substitution $\theta$ is adequate for $\pi$.

Case 4. Suppose that $\pi$ has the form
\[\AXC{$\pi^\prime$}
\noLine
\UIC{$\vdots$}
\noLine
\UIC{$A_1,\dotsc ,A_k, B_1, \dotsc, B_l, \Box^+_{j_1} B_1,\dotsc, \Box^+_{j_l} B_l \Rightarrow_\ast D$}
\LeftLabel{$\mathsf{\Box}_{m,i}$}
\RightLabel{ ,}
\UIC{$ \Upsilon, \Box_{i_1} A_1, \dotsc, \Box_{i_k} A_k, \Box^+_{j_1} B_1,\dotsc, \Box^+_{j_l} B_l \Rightarrow_\alpha \Box_m D , \Lambda$}
\DisplayProof 
\]
where $\pi^\prime$ is a prepared cyclic annotated proof. By the induction hypothesis, there is a finalizing substitution $\theta^\prime$ adequate for $\pi^\prime$ such that $\theta^\prime (F^g_{r(\pi^\prime)} )$ has an injective proof in $\mathsf{J}^+$. The formula $\theta^\prime(F^g_{r(\pi^\prime)})$ has the form 
\begin{multline*}
\theta^\prime (A^g)\wedge\dotso \wedge \theta^\prime(A^g_k)\wedge\\
\wedge \theta^\prime(B^g_1)\wedge\dotso \wedge \theta^\prime(B^g_l)\wedge [y_{j_1}]_\mathsf{tc}\, \theta^\prime(B^g_1)\wedge\dotso \wedge [y_{j_l}]_\mathsf{tc}\, \theta^\prime(B^g_l) \to \theta^\prime(D^g).
\end{multline*}
From Lemma \ref{Lifting lemma}, there is a term $h$ depending only on $\{x_{i_1}, \dotsc , x_{i_k}\}$ and $\{y_{j_1}, \dotsc , y_{j_l}\}$ such that the formula
\begin{multline}\label{form2}
[x_{i_1}]\theta^\prime (A^g)\wedge\dotso \wedge [x_{i_k}]\theta^\prime(A^g_k)\wedge\\
\wedge [y_{j_1}]_\mathsf{tc}\, \theta^\prime(B^g_1)\wedge\dotso \wedge [y_{j_l}]_\mathsf{tc}\, \theta^\prime(B^g_l) \to [h] \theta^\prime(D^g)
\end{multline}
has an injective proof in $\mathsf{J}^+$.
Note that $x_{m,i} \nin \mathit{Dom}(\theta^\prime)$, i.e. $\theta^\prime(x_{m,i})=x_{m,i}$. We put $\theta\coloneq [h/ x_{m,i}] \circ \theta^\prime$.
Applying the substitution $[h/ x_{m,i}]$ to (\ref{form2}), we obtain
\begin{multline}\label{form3}
[x_{i_1}]\theta (A^g)\wedge\dotso \wedge [x_{i_k}]\theta(A^g_k)\wedge\\
\wedge [y_{j_1}]_\mathsf{tc}\, \theta(B^g_1)\wedge\dotso \wedge [y_{j_l}]_\mathsf{tc}\, \theta(B^g_l) \to [h] \theta(D^g),
\end{multline}
which has an injective proof in $\mathsf{J}^+$ by Lemma \ref{Substitution lemma}. In addition, the formula
\[ [h] \theta(D^g)\to [\theta(x_{m,0})+\dotsb + \theta(x_{m,i-1})+ h+\theta(x_{m,i+1})+ \dotsb +\theta(x_{m,g(m-1)-1})] \theta(D^g)\]
is provable in $\mathsf{J}^+_0$,
i.e. $\mathsf{J}^+_0\vdash [h] \theta(D^g)\to \theta ((\Box_m D)^g)$. Now we see that (\ref{form3}) implies $\theta (F^g_{r(\pi)} )$ in $\mathsf{J}^+_0$. Therefore, the formula $\theta (F^g_{r(\pi)} )$ has an injective proof in $\mathsf{J}^+$. Note also that $\theta$ is a finalizing substitution adequate for $\pi$.

Case 5. Suppose that there is a leaf of $\pi$ connected by a back-link with the root. In this case, all sequents on the path from the root to the leaf have the same subscript $\alpha$, and $\alpha= \Box^+_n D$ for some formula $D$. 

Let $R$ denote the following set of nodes of $\pi=(\kappa,d)$: $b\in R$ if and only if every sequent lying on the path from the root of $\pi$ to the node $b$ has the subscript formula $\Box^+_n D$. Note that, for any $b\in R$, the sequent of the node $b$ has the form $\Gamma_b\Rightarrow_{\Box^+_n D}\Delta_b, \Box^+_n D$. We set $G_b\coloneq \bigwedge \Gamma_b\wedge \neg \bigvee \Delta_b$ and $H\coloneq \bigvee \{G_b\mid b\in R \}$. Trivially, $\mathsf{K}^+\vdash \bigwedge \Gamma_b\to \bigvee (\Delta_b\cup \{H\})$ and $\mathsf{J}^+_0\vdash (\bigwedge \Gamma_b\to \bigvee (\Delta_b\cup \{H\}))^g$. 

For any $b\in R$, we define its rank
$\mathit{rk}(b) $ as follows. We put $\mathit{rk}(b)\coloneq 0$ whenever $b$ is the conclusion of a modal rule or is a leaf of $\kappa$ that is not connected by a back-link. We set $\mathit{rk}(b)\coloneq \mathit{rk}(b^\prime)+1$ whenever $b$ is a conclusion of the rule ($\to_\mathsf{R}$) and $b^\prime$ is the corresponding premise. Analogously, $\mathit{rk}(b)\coloneq \max \{\mathit{rk}(b^\prime)+1, \mathit{rk}(b^{\prime\prime})+1\}$ if $b$ is a conclusion of the rule ($\mathsf{\to_L}$) with the premises $b^\prime$ and $b^{\prime\prime}$. If $b$ is a leaf of $\kappa$ connected by a back-link with a node $c$, then we put $\mathit{rk}(b)\coloneq \mathit{rk}(c)+1$.

Let $R_0\coloneq \{a\in R \mid \mathit{rk}(a)=0\}$. For each $a\in R_0$, we define a  finalizing substitution $\sigma_a$ and a justification term $o_a$ such that $o_a$ does not contain provisional variables, $\mathit{Dom}(\sigma_a)\cap \mathit{Dom}(\sigma_b)=\emptyset$ for any two different nodes $a$ and $b$ from $R_0$ and the formula
\begin{gather*}
\sigma_a (G_a^g)\to [o_a] (\sigma_a(D^g)\wedge \sigma_a(H^g))
\end{gather*} 
has an injective proof in $\mathsf{J}^+$. In what follows, we denote the subtree of $\kappa$ with the root $a$ by $\kappa_a$.


Suppose $a$ is a leaf of $\kappa$ and is not connected by a back-link with another node of $\kappa$. In this case, the node $a$ is marked by a sequent of the form $\Gamma^\prime_a, p \Rightarrow_{\Box^+_n D}  p, \Delta^\prime_a, \Box^+_n D $ or $\Gamma^\prime_a, \bot \Rightarrow_{\Box^+_n D} \Delta_a, \Box^+_n D$. We define $\sigma_a$ as the identity substitution and put $o_a\coloneq x_0$. We see that $\mathsf{J}^+_0\vdash \neg G^g_a $ and $\mathsf{J}^+_0\vdash \neg \sigma_a (G^g_a) $. Consequently, $\mathsf{J}^+_0\vdash \sigma_a (G_a^g)\to [o_a] (\sigma_a(D^g)\wedge \sigma_a(H^g)) $.

Suppose $a$ is the conclusion of a modal rule and $\kappa_a$ has the form
\[\AXC{$\kappa^\prime_a$}
\noLine
\UIC{$\vdots$}
\noLine
\UIC{$A_1,\dotsc ,A_k, B_1, \dotsc, B_l, \Box^+_{j_1} B_1,\dotsc, \Box^+_{j_l} B_l \Rightarrow_\ast E$}
\LeftLabel{$\mathsf{\Box}_{m,i}$}
\RightLabel{ .}
\UIC{$ \Upsilon, \Box_{i_1} A_1, \dotsc, \Box_{i_k} A_k, \Box^+_{j_1} B_1,\dotsc, \Box^+_{j_l} B_l \Rightarrow_{\Box^+_n D} \Box_m E , \Lambda, \Box^+_n D$}
\DisplayProof 
\]
Since there are no applications of the rule ($\Box$) between two nodes connected by a back-link, the tree $\kappa^\prime_a$, together with the function $d$ restricted to the leaves of $\kappa^\prime_a$, defines a prepared cyclic annotated proof $\pi^\prime_a$. Let us consider the following prepared cyclic annotated proof
\[\AXC{$\pi^\prime_a$}
\noLine
\UIC{$\vdots$}
\noLine
\UIC{$A_1,\dotsc ,A_k, B_1, \dotsc, B_l, \Box^+_{j_1} B_1,\dotsc, \Box^+_{j_l} B_l \Rightarrow_\ast E$}
\LeftLabel{$\mathsf{\Box}_{m,i}$}
\RightLabel{ ,}
\UIC{$ \Upsilon, \Box_{i_1} A_1, \dotsc, \Box_{i_k} A_k, \Box^+_{j_1} B_1,\dotsc, \Box^+_{j_l} B_l \Rightarrow_{\Box^+_n D} \Box_m E , \Lambda$}
\DisplayProof 
\]
which we denote by $\pi^\prime$. Note that $a$ is different from the root of $\pi$. Therefore, $\pi^\prime$ contains fewer nodes than $\pi$. Applying the induction hypothesis, we find a finalizing substitution $\sigma_a$ adequate for $\pi^\prime$ such that $\sigma_a (F^g_{r(\pi^\prime)} )$ has an injective proof in $\mathsf{J}^+$. We see that $\mathsf{J}^+_0\vdash \sigma_a(F^g_{r(\pi^\prime)})\rightarrow \neg \sigma_a(G^g_a) $. Hence, $\neg \sigma_a (G^g_a) $ is provable in $\mathsf{J}^+$ by an injective proof. Now we put $o_a\coloneq x_0$ and obtain that $ \sigma_a (G_a^g)\to [o_a] (\sigma_a(D^g)\wedge \sigma_a(H^g)) $  has an injective proof in $\mathsf{J}^+$.

Suppose $a$ is the conclusion of a modal rule and $\kappa_a$ has the form
\[\AXC{$\kappa^\prime_a$}
\noLine
\UIC{$\vdots$}
\noLine
\UIC{$\Sigma, \Pi, \Box^+_{j_1} B_1,\dotsc, \Box^+_{j_l} B_l \Rightarrow_\ast E$}
\AXC{$\kappa^{\prime\prime}_a$}
\noLine
\UIC{$\vdots$}
\noLine
\UIC{$\Sigma, \Pi, \Box^+_{j_1} B_1,\dotsc, \Box^+_{j_l} B_l \Rightarrow_{\Box^+_m E} \Box^+_m E$}
\LeftLabel{$\Box^+_{m,j}$}
\RightLabel{ ,} 
\BIC{$  \Upsilon, \Box_{i_1} A_1, \dotsc, \Box_{i_k} A_k, \Box^+_{j_1} B_1,\dotsc, \Box^+_{j_l} B_l \Rightarrow_{\Box^+_n D} \Box^+_m E , \Lambda, \Box^+_n D$}
\DisplayProof 
\]
where $\Sigma= \{A_1,\dotsc ,A_k\}$, $\Pi=\{B_1, \dotsc, B_l\}$ and $\Box^+_m E \neq \Box^+_n D$. Since the path between any two nodes connected by a back-link can not intersect the application ($\Box^+_{m,j}$), the trees $\kappa^\prime_a$ and $\kappa^{\prime\prime}_a$, together with the function $d$ restricted to the corresponding sets of leaves, define prepared cyclic annotated proofs $\pi^\prime_a$ and $\pi^{\prime\prime}_a$. Let us consider the following prepared cyclic annotated proof
\[\AXC{$\pi^\prime_a$}
\noLine
\UIC{$\vdots$}
\noLine
\UIC{$\Sigma, \Pi, \Box^+_{j_1} B_1,\dotsc, \Box^+_{j_l} B_l \Rightarrow_\ast E$}
\AXC{$\pi^{\prime\prime}_a$}
\noLine
\UIC{$\vdots$}
\noLine
\UIC{$\Sigma, \Pi, \Box^+_{j_1} B_1,\dotsc, \Box^+_{j_l} B_l \Rightarrow_{\Box^+_m E} \Box^+_m E$}
\LeftLabel{$\Box^+_{m,j}$}
\RightLabel{ ,} 
\BIC{$  \Upsilon, \Box_{i_1} A_1, \dotsc, \Box_{i_k} A_k, \Box^+_{j_1} B_1,\dotsc, \Box^+_{j_l} B_l \Rightarrow_{\Box^+_n D} \Box^+_m E , \Lambda$}
\DisplayProof 
\]
which we denote by $\pi^\prime$. Since $a$ is different from the root of $\pi$, the proof $\pi^\prime$ contains fewer nodes than $\pi$. By the induction hypothesis, there is a finalizing substitution $\sigma_a$ adequate for $\pi^\prime$ such that $\sigma_a (F^g_{r(\pi^\prime)} )$ has an injective proof in $\mathsf{J}^+$. Note that $\mathsf{J}^+_0\vdash \sigma_a(F^g_{r(\pi^\prime)})\rightarrow \neg \sigma_a(G^g_a) $. Therefore, $\neg \sigma_a (G^g_a) $  has an injective proof in $\mathsf{J}^+$. We put $o_a\coloneq x_0$ and obtain that $ \sigma_a (G_a^g)\to [o_a] (\sigma_a(D^g)\wedge \sigma_a(H^g)) $ is provable in $\mathsf{J}^+$ by an injective proof.

Suppose $a$ is the conclusion of a modal rule and $\kappa_a$ has the form
\[\AXC{$\kappa^\prime_a$}
\noLine
\UIC{$\vdots$}
\noLine
\UIC{$\Sigma, \Pi, \Box^+_{j_1} B_1,\dotsc, \Box^+_{j_l} B_l \Rightarrow_\ast D$}
\AXC{$\kappa^{\prime\prime}_a$}
\noLine
\UIC{$\vdots$}
\noLine
\UIC{$\Sigma, \Pi, \Box^+_{j_1} B_1,\dotsc, \Box^+_{j_l} B_l \Rightarrow_{\Box^+_n D} \Box^+_n D$}
\LeftLabel{$\Box^+_{n,j}$}
\RightLabel{ ,} 
\BIC{$  \Upsilon, \Box_{i_1} A_1, \dotsc, \Box_{i_k} A_k, \Box^+_{j_1} B_1,\dotsc, \Box^+_{j_l} B_l \Rightarrow_{\Box^+_n D} \Box^+_n D , \Lambda$}
\DisplayProof 
\]
where $\Sigma= \{A_1,\dotsc ,A_k\}$ and $\Pi=\{B_1, \dotsc, B_l\}$. We see that the tree $\kappa^\prime_a$, together with the function $d$ restricted to the set of leaves of $\kappa^\prime_a$, defines a prepared cyclic annotated proof $\pi^\prime_a$. By the induction hypothesis, there is a finalizing substitution $\sigma_a$ adequate for  $\pi^\prime_a$ such that $\sigma_a (F^g_{r(\pi^\prime_a)} )$ has an injective proof in $\mathsf{J}^+$. The formula $\sigma_a(F^g_{r(\pi^\prime_a)})$ has the form 
\begin{multline*}
\sigma_a (A^g_1)\wedge\dotso \wedge \sigma_a(A^g_k)\wedge\\
\wedge \sigma_a(B^g_1)\wedge\dotso \wedge \sigma_a(B^g_l)\wedge [y_{j_1}]_\mathsf{tc}\, \sigma_a(B^g_1)\wedge\dotso \wedge [y_{j_l}]_\mathsf{tc}\, \sigma_a(B^g_l) \to \sigma_a(D^g).
\end{multline*}
From the definition of $H$, we have
\begin{multline*}
\mathsf{J}^+_0\vdash\sigma_a (A^g_1)\wedge\dotso \wedge \sigma_a(A^g_k)\wedge\sigma_a(B^g_1)\wedge\dotso \wedge \sigma_a(B^g_l)\wedge\\
\wedge  [y_{j_1}]_\mathsf{tc}\, \sigma_a(B^g_1)\wedge\dotso \wedge [y_{j_l}]_\mathsf{tc}\, \sigma_a(B^g_l) \to \sigma_a(H^g).
\end{multline*}
Hence, the formula
\begin{multline*}
\sigma_a (A^g_1)\wedge\dotso \wedge \sigma_a(A^g_k)\wedge\sigma_a(B^g_1)\wedge\dotso \wedge \sigma_a(B^g_l)\wedge \\
\wedge [y_{j_1}]_\mathsf{tc}\, \sigma_a(B^g_1)\wedge\dotso \wedge [y_{j_l}]_\mathsf{tc}\, \sigma_a(B^g_l) \to \sigma_a(D^g) \wedge \sigma_a(H^g).
\end{multline*}
has an injective proof in $\mathsf{J}^+$. By Lemma \ref{Lifting lemma}, there is a term $o_a$ depending only on $\{x_{i_1}, \dotsc , x_{i_k}\}$ and $\{y_{j_1}, \dotsc , y_{j_l}\}$ such that the formula
\begin{multline}\label{form6}
[x_{i_1}]\sigma_a (A^g_1)\wedge\dotso \wedge [x_{i_k}]\sigma_a(A^g_k)\wedge\\
\wedge [y_{j_1}]_\mathsf{tc}\, \sigma_a(B^g_1)\wedge\dotso \wedge [y_{j_l}]_\mathsf{tc}\, \sigma_a(B^g_l) \to [o_a] (\sigma_a(D^g) \wedge \sigma_a (H^g))
\end{multline}
has an injective proof in $\mathsf{J}^+$. Note that (\ref{form6}) implies $ \sigma_a (G_a^g)\to [o_a] (\sigma_a(D^g)\wedge \sigma_a(H^g)) $ in $\mathsf{J}^+_0$.

Now the finalizing substitution $\sigma_a$ and the justification term $o_a$ are well defined for any $a\in R_0$. Moreover, for each $a\in R_0$, the formula
\begin{gather}\label{form4}
\sigma_a (G_a^g)\to [o_a] (\sigma_a(D^g)\wedge \sigma_a(H^g))
\end{gather} 
is provable in $\mathsf{J}^+_{\mathit{cs}_a}$ for some finite injective constant specification $\mathit{cs}_a$. We assume that all sets $\mathit{Con}(\mathit{cs}_a)$ are pairwise disjoint. Otherwise, we make them disjoint by renaming the constants and modifying appropriately substitutions $\sigma_a$ and terms $o_a$. Note that $\sigma_a\circ    \sigma_b=\sigma_b\circ \sigma_a$ for any two different nodes $a$ and $b$ since $\mathit{Dom}(\sigma_a)\cap \mathit{Dom}(\sigma_b)=\emptyset$. Let $\sigma$ be the composition of all substitutions $\sigma_a$ for $a\in R_0$. Obviously, $\sigma$ is finalizing. Now we put $\mathit{cs}\coloneq \bigcup \{\sigma (\mathit{cs}_a)\mid a\in R_0\}$. Since the sets $\mathit{Con}(\sigma(\mathit{cs}_a))= \mathit{Con}(\mathit{cs}_a)$ are pairwise disjoint, $\mathit{cs}$ is a finite injective constant specification. 

We claim that, for each $b\in R$, there is a justification term $v_b$ such that $v_b$ does not contain provisional variables and the formula
\begin{gather}\label{form5}
\sigma (G^g_b)\to [v_b] (\sigma(D^g)\wedge \sigma(H^g)) , 
\end{gather} 
is provable in $\mathsf{J}^+_\mathit{cs}$. We  proceed by subinduction on $\mathit{rk}(b)$. 

Case A. Suppose $ \mathit{rk}(b)=0$, i.e. $b\in R_0$. Applying $\sigma$ to (\ref{form4}), we obtain 
\begin{gather*}
 \sigma (G^g_b)\to [v_b] (\sigma(D^g)\wedge \sigma(H^g)) , 
\end{gather*}
where $v_b=o_b$. This formula is provable in $\mathsf{J}^+_{\sigma(\mathit{cs}_b)}$ by Lemma \ref{Substitution lemma}. Therefore, it is provable in $\mathsf{J}^+_{\mathit{cs}}$. 

Case B. Suppose $b$ is a leaf of $\kappa$ connected by a back-link with a node $c$.  From the induction hypothesis for $c$, the formula $\sigma (G_{c}^g)\to [v_{c}] (\sigma(D^g)\wedge \sigma(H^g)) $ is provable in $\mathsf{J}^+_{\mathit{cs}}$ for some term $v_{c}$. In addition, $v_c$ does not contain provisional variables. Note that $\sigma (G_{b}^g)$ coincides with $\sigma (G_{c}^g)$. Therefore, the formula $\sigma(G_b^g)\to [v_b] (\sigma(D^g)\wedge \sigma(H^g)) $ is provable in $\mathsf{J}^+_{\mathit{cs}}$ for $v_b\coloneq v_{c}$, and $v_b$ does not contain provisional variables. 

Case C. Suppose the tree $\kappa_b$ has the form 
\[
\AXC{$\kappa^\prime_b$}
\noLine
\UIC{$\vdots$}
\noLine
\UIC{$\Gamma_b, A \Rightarrow_{\Box^+_n D}  B ,\Delta^\prime_b, \Box^+_n D$}
\LeftLabel{$\mathsf{\rightarrow_R}$}
\RightLabel{ .}
\UIC{$\Gamma_b \Rightarrow_{\Box^+_n D}  A \rightarrow B ,\Delta^\prime_b, \Box^+_n D$}
\DisplayProof
\]
In this case, $G_b $ coincides with $\bigwedge \Gamma_b \wedge \neg \bigvee \{ A \rightarrow B\}\cup \Delta^\prime_b$. Let us denote the child of $b$ by $b^\prime$. Note that $\mathit{rk}(b^\prime)< \mathit{rk}(b)$. By the subinduction hypothesis, there is a term $v_{b^\prime}$ without occurrences of provisional variables such that the formula $\sigma (G_{b^\prime}^g)\to [v_{b^\prime}] (\sigma(D^g)\wedge \sigma(H^g)) $  is provable in $\mathsf{J}^+_{\mathit{cs}}$. Since $\mathsf{J}^+_0\vdash \sigma (G_{b}^g) \to \sigma (G_{b^\prime}^g)$, we obtain $\mathsf{J}^+_\mathit{cs}\vdash \sigma (G_b^g)\to [v_b] (\sigma(D^g)\wedge \sigma(H^g)) $ for $v_b\coloneq v_{b^\prime}$. We see that $v_b$ does not contain provisional variables. 

Case D. Suppose the tree $\kappa_b$ has the form
\[
\AXC{$\kappa^\prime_b$}
\noLine
\UIC{$\vdots$}
\noLine
\UIC{$\Gamma^\prime_b , B \Rightarrow_{\Box^+_n D}  \Delta_b, \Box^+_n D$}
\AXC{$\kappa^{\prime\prime}_b$}
\noLine
\UIC{$\vdots$}
\noLine
\UIC{$\Gamma^\prime_b \Rightarrow_{\Box^+_n D}  A, \Delta_b,\Box^+_n D$}
\LeftLabel{$\mathsf{\rightarrow_L}$}
\RightLabel{ .}
\BIC{$\Gamma^\prime_b , A \rightarrow B \Rightarrow_{\Box^+_n D}  \Delta_b, \Box^+_n D$}
\DisplayProof 
\]
In this case, $G_b $ coincides with $\bigwedge \Gamma^\prime_b\cup \{ A \rightarrow B\}  \wedge \neg \bigvee \Delta_b$. Let $b^\prime$ and $b^{\prime\prime}$ be the children of $b$. We see that $\mathit{rk}(b^\prime)< \mathit{rk}(b)$ and $\mathit{rk}(b^{\prime\prime})< \mathit{rk}(b)$. By the subinduction hypotheses for $b^\prime$ and $b^{\prime\prime}$, there are terms $v_{b^\prime}$ and $v_{b^{\prime\prime}}$ without occurrences of provisional variables such that the formulas $\sigma (G_{b^\prime}^g)\to [v_{b^\prime}] (\sigma(D^g)\wedge \sigma(H^g)) $ and $\sigma (G_{b^{\prime\prime}}^g)\to [v_{b^{\prime\prime}}] (\sigma(D^g)\wedge \sigma(H^g)) $ are provable in $\mathsf{J}^+_{\mathit{cs}}$. Since $\mathsf{J}^+_0\vdash \sigma (G_{b}^g) \to \sigma (G_{b^\prime}^g)\vee \sigma (G_{b^{\prime\prime}}^g)$, we obtain $\mathsf{J}^+_\mathit{cs}\vdash \sigma (G_b^g)\to [v_{b^\prime} + v_{b^{\prime\prime}}] (\sigma(D^g)\wedge \sigma(H^g)) $.
It remains to set $v_b\coloneq v_{b^\prime}+ v_{b^{\prime\prime}}$. 

We see that, for any $b\in R$, there is a justification term $v_b$ such that $v_b$ does not contain provisional variables and formula (\ref{form5}) is provable in $\mathsf{J}^+_{\mathit{cs}}$. The claim is checked.
 
We define $v$ as the sum of the terms $v_b$  (in any order) for $b\in R$. From Axiom (v), we have $\mathsf{J}_0^+\vdash [v_b](\sigma(D^g)\wedge \sigma (H^g)) \to [v] (\sigma(D^g)\wedge \sigma (H^g))$. Consequently, $\sigma (G_b^g)\to [v] (\sigma(D^g)\wedge \sigma (H^g))$ is provable in $\mathsf{J}^+_{\mathit{cs}}$. Since $H\coloneq \bigvee \{G_b\mid b\in R \}$, the formula  
\[\sigma (H^g)\to [v] (\sigma(D^g)\wedge \sigma (H^g)) \] 
has an injective proof in $\mathsf{J}^+$. Now, by Lemma \ref{induction rule lemma}, there is a term $t$ such that $t$ does not contain provisional variables and the formula 
\[\sigma (H^g)\to [t]_\mathsf{tc} \,\sigma(D^g)\] 
has an injective proof in $\mathsf{J}^+$. Recall that in the case under consideration there is a leaf of $\pi$ connected by a back-link with the root. From the definition of cyclic annotated proof, the path from the root of $\pi$ to this leaf intersects an application of the rule ($\Box^+_{n,j}$). Furthermore, all applications of the rule ($\Box^+$) whose right premises belong to $R$ are labelled with the same indices $n$ and $j$. Note that $y_{n,j}\nin \mathit{Dom}(\sigma)$ since $\mathit{Dom}(\sigma)= \bigcup \{\mathit{Dom}(\sigma_a)\mid a\in R_0\}$. We define the substitution $\theta$ so that the value of $\theta$ coincides with the value of $\sigma$ on every justification variable except $y_{n,j}$ and $\theta (y_{n,j})= t$. Applying $[t/y_{n,j}]$ to $\sigma (H^g)\to [t]_\mathsf{tc} \,\sigma(D^g)$, we obtain the formula $\theta (H^g)\to [t]_\mathsf{tc} \,\theta(D^g)$, which, by Lemma \ref{Substitution lemma}, has an injective proof in $\mathsf{J}^+$. In addition, the formulas $ \theta (G^g_{r(\pi)}) \to \theta (H^g)$ and 
\[ [t]_\mathsf{tc} \, \theta(D^g)\to [\theta(y_{n,0})+\dotsb + \theta(y_{n,j-1})+ t+\theta(y_{n,j+1})+ \dotsb +\theta(y_{n,g(n)-1})]_\mathsf{tc} \, \theta(D^g)\]
are provable in $\mathsf{J}^+_0$. Therefore, $\theta ((G_{r(\pi)} \to \Box^+_n D)^g)$ has an injective proof in $\mathsf{J}^+$. Notice that $\theta ((G_{r(\pi)} \to \Box^+_n D)^g)$ is equivalent to the formula $\theta (F_{r(\pi)}^g)$ in $\mathsf{J}^+_0 $. Consequently, $\theta (F_{r(\pi)}^g)$ is provable in $\mathsf{J}^+$ by an injective proof. We also see that $\theta$ is adequate substitution for $\pi$.

Case 6. Suppose the lowermost application of an inference rule in $\pi$ has the form 
\[\AXC{$\Sigma, \Pi, \Box^+_{j_1} B_1,\dotsc, \Box^+_{j_l} B_l \Rightarrow_\ast D$}
\AXC{$\Sigma, \Pi, \Box^+_{j_1} B_1,\dotsc, \Box^+_{j_l} B_l \Rightarrow_{\Box^+_n D} \Box^+_n D$}
\LeftLabel{$\Box^+_{n,j}$}
\RightLabel{ ,} 
\BIC{$  \Upsilon, \Box_{i_1} A_1, \dotsc, \Box_{i_k} A_k, \Box^+_{j_1} B_1,\dotsc, \Box^+_{j_l} B_l \Rightarrow_\alpha \Box^+_n D , \Lambda$}
\DisplayProof 
\]
where $\Sigma= \{A_1,\dotsc ,A_k\}$ and $\Pi=\{B_1, \dotsc, B_l\}$. Without loss of generality, we assume that $\alpha= \Box^+_n D$. Otherwise, we replace $\alpha$ with $\Box^+_n D$ and obtain a prepared cyclic annotated proof with the same number of nodes as the proof $\pi$, and with the same formula of the root $F_{r(\pi)}$. 


From this point on, the argument repeats what happened in Case 5. The required substitution $\theta$ is defined in exactly the same way as before. Therefore, we omit further details.
\end{proof}

\begin{proof}[Proof of Theorem \ref{RelThm}]
Assume $\mathsf{K}^+\vdash A$. There exists a regular $\infty$-proof of the sequent $\Rightarrow A$ by Corollary \ref{completeness}. Applying Lemma \ref{AnnLem} to this $\infty$-proof, we find a regular properly annotated $\infty$-proof of $\Rightarrow_\alpha B$, where $B^\circ= A$. From Lemma \ref{CyclLem}, there exists a cyclic annotated proof $\eta$ for the properly annotated sequent $\Rightarrow_\alpha B$.

Using $\eta$, we define a prepared cyclic  annotated proof $\pi$ and a bounding function $g$ for this proof $\pi$ as follows. If $\eta$ contains $k$ applications of the rule ($\Box_{2m+1}$), then we enumerate these applications starting from $0$ to $k-1$ and set $g(m-1) = k$; if $\eta$ does not contain applications of ($\Box_{2m+1}$), then we set $g(m-1) = 0$. Two applications of ($\Box^+_{2n+1}$) in the proof $\eta$ are called \emph{equivalent} if all sequents on the shortest path connecting the right premises of the applications have the same subscript formula $\Box^+_{2n+1} D$. If $\eta$ contains $l$ equivalence classes of applications of the rule ($\Box^+_{2n+1}$), then we enumerate these classes starting from $0$ to $l-1$ and set $g(n) = l$; if $\eta$ does not contain applications of ($\Box^+_{2n+1}$), then we set $g(n) = 0$. We label each occurrence of the rule ($\Box^+_{2n+1}$) from the $i$-th class by $i$.

In this way, we obtain a prepared cyclic annotated proof $\pi$ with a bounding function $g$. From Lemma \ref{MainLem}, there is a finalizing substitution $\theta$ such that the formula $\theta (B^g)$ has an injective proof in $\mathsf{J}^+$. We also see that $\theta (B^g)$ does not contain provisional variables. It remains to note that $\theta (B^g)$
is a normal realization for $A$.
\end{proof}

\subsubsection*{Acknowledgements.}
I heartily thank my wife Mariya Shamkanova for her constant and warm support. SDG.
\bibliographystyle{amsplain}
\bibliography{Collected}
\end{document}